\theoremstyle{plain}
\newtheorem{theorem}{Theorem}[section]	
\newtheorem{lemma}[theorem]{Lemma}
\newtheorem{corollary}[theorem]{Corollary}
\theoremstyle{definition}
\newtheorem{definition}[theorem]{Definition}
\newtheorem{example}[theorem]{Example}
\theoremstyle{remark}
\newtheorem{remark}[theorem]{Remark}
\numberwithin{equation}{section}	
\def\C{\mathbb{C}}
\def\R{\mathbb{R}}    
\def\N{\mathbb{N}}
\def\vu{\mathsf{u}}
\def\mA{\mathbf{A}}
\def\lD{\mathcal{D}}
\def\lG{\mathcal{G}}
\def\lH{\mathcal{H}}
\def\lL{\mathcal{L}}
\def\lV{\mathcal{V}}
\def\lW{\mathcal{W}}
\def\dom{\operatorname{dom}}
\def\dim{\operatorname{dim}}
\def\ker{\operatorname{ker}}
\def\ran{\operatorname{ran}}
\def\scp#1#2{\langle\, #1 \mid #2 \,\rangle}  
\def\iscp#1#2{[\, #1 \mid #2 \,]}  
\def\iscpA#1#2{[\, #1 \mid #2 \,]_A}  
\def\mA{{\bold A}}
\def\vu{{\sf u}}
\begin{document}

\title[The von Neumann extension theory for abstract Friedrichs operators]{The von Neumann extension theory for abstract Friedrichs operators}

\author{M.~Erceg}\address{Marko Erceg,
	Department of Mathematics, Faculty of Science, University of Zagreb, Bijeni\v{c}ka cesta 30,
	10000 Zagreb, Croatia}\email{maerceg@math.hr}

\author{S.~K.~Soni}\address{Sandeep Kumar Soni,
	Department of Mathematics, Faculty of Science, University of Zagreb, Bijeni\v{c}ka cesta 30,
	10000 Zagreb, Croatia}\email{sandeep@math.hr}

\subjclass[2020]{35F45, 46C20, 47B25, 47B28}


\keywords{
symmetric positive first-order system of partial differential equations,
dual pairs,
deficiency index, 
indefinite inner product space, 
the von Neumann extension theory}

\begin{abstract}
The theory of abstract Friedrichs operators was introduced some fifteen years ago with the aim of providing a more comprehensive framework for the study of positive symmetric systems of first-order partial differential equations, nowadays better known as (classical) Friedrichs systems.
Since then, the theory has not only been frequently applied in numerical and analytical research of Friedrichs systems, but has continued to evolve as well. 
In this paper we provide an explicit characterisation and a classification of abstract Friedrichs operators.
More precisely, we show that every abstract Friedrichs operators can be written as the sum of a skew-selfadjoint operator and a bounded self-adjoint strictly-positive operator. 
Furthermore, we develop a classification of realisations of abstract Friedrichs operators in the spirit of the von Neumann extension theory, which, when applied to the symmetric case, extends the classical theory. 
\end{abstract}

\maketitle

%

\section{Introduction}\label{sec:intro}

\emph{Abstract Friedrichs operators} were introduced by Ern, Guermond and Caplain \cite{EGC} some fifteen years ago with the aim of providing a more comprehensive framework for the study of \emph{positive symmetric systems}.
These systems originate from the work of Friedrichs \cite{KOF} (following his research on symmetric hyperbolic systems \cite{KOFh}), and today are customarily referred to as \emph{(classical) Friedrichs systems}.
The reason why these systems are still attractive to the community lies in the fact that a wide variety of (semi)linear equations of mathematical physics (regardless of their order), including classical elliptic, parabolic and hyperbolic equations, can be adapted, or rewritten, in the form of Friedrichs systems.
Moreover, the same applies to equations that change their type (the so-called mixed-type equations), such as the Tricomi equation, the study of which was actually the main motivation of Friedrichs to introduce this concept.
A nice historical exposition of the classical Friedrichs' theory (which was very active until 1970's) can be found in \cite{MJensen}.

The renewed interest in Friedrichs systems arose from numerical analysis (see e.g.~\cite{HMSW, MJensen}) based on the need to apply (discontinuous) Galerkin finite element methods to partial differential equations of various types.
The well-posedness results for Friedrichs systems obtained within the classical theory were not satisfactory; there were only results on the existence of weak solutions, and the uniqueness of strong ones, leaving the general question open on the joint existence and uniqueness of either a weak or a strong solution.
This brings us to abstract Friedrichs operators since in \cite{EGC} the authors obtained a proper well-posedness result (Theorem \ref{thm:abstractFO-prop}(xii) below) within the operator-theoretic framework introduced for this purpose (see also \cite{ABcpde}).
This novel approach initiated a number of new investigations in various directions. For example, studies of different representations of boundary conditions and the relation with classical theory \cite{ABcpde, ABjde, ABVisrn, AEM-2017, BH21, ES22}, applications to diverse (initial-)boun\-dary value problems of elliptic, hyperbolic, and parabolic type \cite{ABVjmaa, BEmjom, BVcpaa, EM19, EGsemel, MDS}, and the development of different numerical schemes \cite{BDG, BEF, CM21, CHWY23, EGbis, EGter}.

While we postpone the introduction of the precise definition of abstract Friedrichs operators to the subsequent section (Definition \ref{def:abstractFO}), here we discuss the main ideas. 
Assume we are given two densely defined linear operators $T_0$, $\widetilde{T}_0$ on a Hilbert space $\lH$ such that $T_0\subseteq \widetilde{T}_0^*$ and $\widetilde{T}_0\subseteq T_0^*$ ($T_0^*$ denotes the adjoint operator of $T_0$ and $\widetilde{T}_0\subseteq T_0^*$ is understood in the standard way: $\dom \widetilde{T}_0\subseteq\dom T_0^*$ and $T_0^*|_{\dom\widetilde{T}_0}=\widetilde{T}_0$). Now we seek for realisations (or extensions) $T$ of $T_0$, i.e.~$T_0\subseteq T\subseteq \widetilde{T}_0^*$, such that $T:\dom T\to\lH$ is bijective. Namely, for any right-hand side $f\in\lH$ the inverse linear problem induced by $T$:
$$
Tu=f \,,
$$
has a unique solution $u\in\dom T$. Hence, if we think of $T$ as a differential operator (e.g.~a classical Friedrichs operator), then the associated problem is well-posed and the choice of realisation $T$ corresponds to the prescribed (initial-)boundary conditions. 
Then it is natural to develop a mechanism for recognition, i.e.~classification, of all bijective realisations.
Furthermore, we distinguish bijective realisations with signed boundary map (see Theorem \ref{thm:abstractFO-prop}(xii) below; cf.~\cite{ABcpde, EGC}), and a particular representative of such a class is given by $\dom T=\dom T^*$.
The latter case is particularly desirable in the analysis, as it is evident in \cite[Section 4]{EGC}, where such a situation ensures existence of certain projectors which were needed to relate different abstract notions of imposing boundary conditions (see also \cite{ABcpde}). On the other hand, in \cite[Theorem 18]{AEM-2017} the same is required to have a complete characterisation of all bijective realisations with signed boundary map.

Concerning the classification of all bijective realisations, in \cite{AEM-2017} Grubb's universal operator extension theory for the non-symmetric setting \cite{Grubb-1968} (see also \cite[Chapter 13]{Grubb}) was applied. 
The first main result of this contribution is the development of an alternative approach based on the adaptation (or generalisation) of the von Neumann extension theory for symmetric operators (cf.~\cite[Section 13.2]{Schmudgen}), which in a way has already started in \cite[Section 3]{ES22} by deriving the decomposition of the graph space (see Theorem \ref{thm:abstractFO-prop}(ix) below).  
This new approach proved to be better when restricting only to bijective realisations with signed boundary map (see Theorem \ref{thm:classif} below). Moreover, it is recognised that bijective realisations $T$ with the property that $\dom T=\dom T^*$, mentioned above, exist if and only if the kernels
of maximal operators $\widetilde{T}_0^*$ and $T_0^*$ are isomorphic.

In the second main result we show that every abstract Friedrichs operator can be written as a sum of a skew-selfadjoint operator and a bounded self-adjoint strictly-positive operator (see Theorem \ref{thm:abstractFO_L0+S} below). 
This result enables a convenient connection between theories of abstract Friedrichs and (skew-)symmetric operators, many aspects of which are addressed throughout the manuscript. In particular in the last section, where a generalisation of the von Neumann extension theory for symmetric operators is provided. 
Therefore, we hope that this article will bring the theory of abstract Friedrichs systems closer to a wider group of people working (or interested) in operator theory.

The paper is organised as follows. 
In Section \ref{sec:abstractFO} we recall the definition of abstract Friedrichs systems and concisely present the main properties. Two main results of the paper are the subject of the following two sections.
A characterisation of abstract Friedrichs operators is provided in Section \ref{sec:charact}, while the classification in the spirit of the von Neumann extension theory is developd in Section \ref{sec:classif}. Finally, the paper is closed with some corollaries for the symmetric setting.

\section{Abstract Friedrichs operators}\label{sec:abstractFO}

\subsection{Definition and main properties}

The abstract Hilbert space formalism for Friedrichs systems which we 
study in this paper was introduced 
and developed in \cite{EGC, ABcpde} for real vector spaces, while 
the required differences for complex vector spaces have been 
supplemented more recently in \cite{ABCE}. 
Here we present the definition in the form given in
\cite[Definition 1]{AEM-2017}.

\begin{definition}\label{def:abstractFO}
A (densely defined) linear operator $T_0$ on a complex Hilbert space $\lH$
(a scalar product is denoted by
$\scp\cdot\cdot$, which we take to be anti-linear in the second entry)
is called an \emph{abstract Friedrichs operator} if there exists a
(densely defined) linear operator $\widetilde{T}_0$ on $\lH$ with the following properties:
\begin{itemize}
 \item[(T1)] $T_0$ and $\widetilde{T}_0$ have a common domain $\lD$, i.e.~$\dom T_0=\dom\widetilde{T}_0=\lD$,
 which is dense in $\lH$, satisfying
 \[
 \scp{T_0\phi}\psi \;=\; \scp\phi{\widetilde T_0\psi} \;, \qquad \phi,\psi\in\mathcal{D} \,;
 \]
 \item[(T2)] there is a constant $\lambda>0$ for which
 \[
 \|(T_0+\widetilde{T}_0)\phi\| \;\leqslant\; 2\lambda\|\phi\| \;, \qquad \phi\in\mathcal{D} \,;
 \]
 \item[(T3)] there exists a constant $\mu>0$ such that
 \[
 \scp{(T_0+\widetilde{T}_0)\phi}\phi \;\geqslant\; 2\mu \|\phi\|^2 \;, \qquad \phi\in\mathcal{D} \,.
 \]
\end{itemize}
The pair $(T_0,\widetilde{T}_0)$ is referred to as a \emph{joint pair of abstract Friedrichs operators}
(the definition is indeed symmetric in $T_0$ and $\widetilde{T}_0$).
\end{definition}

Before moving to the main topic of the paper, 
let us briefly recall the essential properties 
of (joint pairs of) abstract Friedrichs operators, 
which we summarise in the form of a theorem.
At the same time, we introduce the notation that is used throughout the paper.
The presentation consists of two steps: first we deal with the consequences 
of conditions (T1)--(T2), and then we highlight the additional structure implied by condition (T3). 
A similar approach can be found in \cite[Theorem 2.2]{BEW23}.
This result enables a convenient connection between theories of abstract Friedrichs and symmetric operators, many aspects of which are addressed throughout the manuscript.
\begin{theorem}\label{thm:abstractFO-prop}
Let a pair of linear operators $(T_0,\widetilde{T}_0)$ on $\lH$ satisfy {\rm (T1)} and {\rm (T2)}. Then the following holds.
\begin{enumerate}
\item[i)] $T_0\subseteq \widetilde{T}_0^*=:T_1$ and $\widetilde{T}_0\subseteq T_0^*=:\widetilde{T}_1$, where 
$\widetilde{T}_0^*$ and $T_0^*$ are adjoints of $\widetilde{T}_0$ and $T_0$, respectively.

\item[ii)] The pair of closures $(\overline{T}_0,\overline{\widetilde{T}}_0)$ satisfies {\rm (T1)--(T2)} with the same constant $\lambda$.

\item[iii)] $\dom \overline{T}_0=\dom\overline{\widetilde{T}}_0=:\lW_0$ and $\dom T_1=\dom\widetilde{T}_1=:\lW$.

\item[iv)] The graph norms $\|\cdot\|_{T_1}:=\|\cdot\|+\|T_1\cdot\|$ and $\|\cdot\|_{\widetilde T_1}
	:=\|\cdot\|+\|\widetilde T_1\cdot\|$ are equivalent, $(\lW,\|\,\cdot\,\|_{T_1})$ is a Hilbert space (the \emph{graph space})
	and $\lW_0$ is a closed subspace containing $\lD$.
	
\item[v)] The linear operator $\overline{T_0+\widetilde{T}_0}$ is everywhere defined, bounded and self-adjoint on 
$\lH$ such that on $\lW$ it coincides with $T_1+\widetilde{T}_1$.

\item[vi)] The sesquilinear map 
\begin{equation}\label{eq:D}
\iscp uv \;:=\; \scp{T_1u}{v} 
- \scp{u}{\widetilde{T}_1v} \;,
\quad u,v\in\lW \,, 
\end{equation}
is an indefinite inner product on $\lW$ (cf.~\cite{Bo}) and we have $\lW^{[\perp]}=\lW_0$ and $\lW_0^{[\perp]}=\lW$, where
the $\iscp\cdot\cdot$-orthogonal complement of a set $X\subseteq \lW$
is defined by
\begin{equation*} 
X^{[\perp]} := \bigl\{u\in \lW : (\forall v\in X) \quad 
\iscp uv = 0\bigr\}
\end{equation*}
and it is closed in $\lW$. Moreover, $X^{[\perp][\perp]}=X$ if and only if 
$X$ is closed in $\lW$ and $\lW_0\subseteq X$. 

For future reference, let us define
\begin{equation}\label{eq:lWposneg}
\begin{aligned}
	\lW^+ &:= \bigl\{u\in\lW : \iscp{u}{u}\geq 0\bigr\} \\
	\lW^- &:= \bigl\{u\in\lW : \iscp{u}{u}\leq 0\bigr\} \,.
\end{aligned}
\end{equation}
Note that $X\subseteq X^{[\perp]}$ implies $X\subseteq \lW^+\cap\lW^-$.
\end{enumerate}

Assume, in addition, {\rm (T3)}, i.e.~$(T_0,\widetilde{T}_0)$ is 
a joint pair of abstract Friedrichs operators. Then
\begin{itemize}
\item[vii)] $(\overline{T}_0,\overline{\widetilde{T}}_0)$ satisfies {\rm (T3)} with the same constant $\mu$.
\item[viii)] A lower bound of $\overline{T_0+\widetilde{T}_0}$ is $2\mu>0$.
\item[ix)] We have
\begin{equation}\label{eq:decomposition}
\lW \;=\; \lW_0 \dotplus \ker T_1 \dotplus \ker\widetilde T_1 \;,
\end{equation}
where the sums are direct and all spaces on the right-hand side 
are pairwise $\iscp{\cdot}{\cdot}$-orthogonal. 
Moreover, the linear projections
\begin{equation}\label{eq:projections}
p_\mathrm{k} : \lW \to \ker T_1 \quad \hbox{and} \quad
p_\mathrm{\tilde k}:\lW\to \ker \widetilde{T}_1
\end{equation}
are continuous as maps $(\lW,\|\cdot\|_{T_1})\to (\lH,\|\cdot\|)$, i.e.~$p_\mathrm{k}, p_\mathrm{\tilde k}\in\lL(\lW,\lH)$.
\item[x)] Let $\lV$ be a subspace of the graph space $\lW$ such that
$\lW_0\subseteq\lV\subseteq \lW^+$ (see \eqref{eq:lWposneg}). Then 
$$
(\forall u\in \lV) \qquad \|T_1u\|\geq \mu\|u\| \,.
$$ 
In particular, $\overline{\ran (T_1|_\lV)}=\ran \overline{T_1|_\lV}$.

Analogously, if $\widetilde{\lV}$ is a subspace of $\lW$ such that 
$\lW_0\subseteq\widetilde{\lV}\subseteq\lW^-$, then 
$\|\widetilde T_1 v\|\geq \mu\|v\|$, $v\in\widetilde{\lV}$, 
and $\overline{\ran (\widetilde{T}_1|_{\widetilde{\lV}})}=
\ran \overline{\widetilde{T}_1|_{\widetilde \lV}}$.

\item[xi)] Let $\lV\subseteq\lW$ be a closed subspace (in $\lW$) containing $\lW_0$. 
Then operators $T_1|_\lV$ and $\widetilde{T}_1|_{\widetilde{\lV}}$ are 
mutually adjoint, i.e.~$(T_1|_\lV)^*=\widetilde{T}_1|_{\widetilde{\lV}}$
and $(\widetilde{T}_1|_{\widetilde{\lV}})^*=T_1|_\lV$, if and only if
$\widetilde{\lV}=\lV^{[\perp]}$.
\item[xii)]  Let $\lV\subseteq\lW$ be a closed subspace containing $\lW_0$
such that $\lV\subseteq \lW^+$ and $\lV^{[\perp]}\subseteq\lW^-$.
Then $T_1|_\lV:\lV\to\lH$ and $\widetilde{T}_1|_{{\lV}^{[\perp]}}:\lV^{[\perp]}\to\lH$ are bijective,
i.e.~isomorphisms when we equip their domains with the 
graph topology, and for every $u\in\lV$ the following estimate holds:
\begin{equation}\label{eq:apriori}
	\|u\|_{T_1} \leq \Bigl(1+\frac{1}{\mu}\Bigr) \|T_1 u\| \,.
\end{equation}
The same estimate holds for $\widetilde{T}_1$ and ${\lV}^{[\perp]}$ replacing $T_1$ and $\lV$, respectively.

These bijective realisations of $T_0$ and $\widetilde{T}_0$ we call 
\emph{bijective realisations with signed boundary map}. 
\item[xiii)] Let $\lV\subseteq\lW$ be a closed subspace containing $\lW_0$.
Then $T_1|_\lV:\lV\to\lH$ is bijective if and only if $\lV\dotplus\ker T_1 = \lW$.
\end{itemize}
\end{theorem}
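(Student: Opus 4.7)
My plan is to prove the two implications separately. The forward direction falls out almost immediately from the assumed bijectivity; the backward direction requires the additional fact that $T_1:\lW\to\lH$ is onto, which I intend to extract from part (xii) applied to an explicit reference subspace coming from the decomposition (ix). That verification is the only step where any real work is needed.

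For ($\Rightarrow$), assume $T_1|_\lV:\lV\to\lH$ is bijective. Injectivity of $T_1|_\lV$ forces $\lV\cap\ker T_1=\{0\}$, so the sum $\lV+\ker T_1$ is automatically direct. Given $w\in\lW$, surjectivity yields $v\in\lV$ with $T_1v=T_1w$, so $w-v\in\ker T_1$ and $w=v+(w-v)\in\lV\dotplus\ker T_1$. Conversely, assume $\lV\dotplus\ker T_1=\lW$. Directness of the sum gives injectivity of $T_1|_\lV$, and surjectivity onto $\lH$ will follow once I know that $T_1:\lW\to\lH$ itself is onto: then for $f\in\lH$ I pick any preimage $w\in\lW$, decompose $w=v+k$ with $v\in\lV$, $k\in\ker T_1$, and read off $T_1v=f$.

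To obtain surjectivity of $T_1$ on $\lW$, I plan to apply (xii) to
\[
\lV_0\;:=\;\lW_0\dotplus\ker\widetilde T_1,
\]
which is closed in $\lW$ as the kernel of the continuous projection $p_\mathrm{k}$ from (ix). By the pairwise $\iscp{\cdot}{\cdot}$-orthogonality of the three summands in (ix) together with $\iscp{u_0}{v}=0$ for $u_0\in\lW_0=\lW^{[\perp]}$ and any $v\in\lW$, every $u=u_0+\eta\in\lV_0$ with $\eta\in\ker\widetilde T_1$ satisfies $\iscp{u}{u}=\iscp{\eta}{\eta}$; combining $\widetilde T_1\eta=0$ with (viii) gives
\[
\iscp{\eta}{\eta}\;=\;\scp{T_1\eta}{\eta}\;=\;\scp{(T_1+\widetilde T_1)\eta}{\eta}\;\geq\;2\mu\|\eta\|^2\;\geq\;0,
\]
so $\lV_0\subseteq\lW^+$. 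A symmetric computation yields $\ker T_1\subseteq\lW^-$, and positive-definiteness of $\iscp{\cdot}{\cdot}$ on $\ker\widetilde T_1$ (with a matching negative-definiteness on $\ker T_1$) identifies $\lV_0^{[\perp]}=\lW_0\dotplus\ker T_1\subseteq\lW^-$. Thus (xii) applies to $\lV_0$ and furnishes $T_1|_{\lV_0}:\lV_0\to\lH$ bijective; in particular $T_1:\lW\to\lH$ is onto, which closes the argument. The main (and really only) obstacle is the verification that $\lV_0$ meets the hypotheses of (xii); by the orthogonality in (ix) this reduces to reading the signs of $\iscp{\cdot}{\cdot}$ on the two kernels off the lower bound (viii).
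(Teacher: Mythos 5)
Your proof of part (xiii) is correct, and it follows essentially the same route as the paper's source (\cite[Lemma~3.10]{ES22}): the forward implication is the direct set-theoretic argument, and the backward implication rests on surjectivity of $T_1:\lW\to\lH$, which you obtain by applying (xii) to the reference subspace $\lW_0\dotplus\ker\widetilde T_1$ (the same reference subspace the paper later uses in Remark~\ref{rem:V_Uperp}, and whose bijectivity is exactly \cite[Corollary~3.2]{ES22}). All the supporting sign computations on the kernels are correct and match Remark~\ref{rem:kernels_hilbert}.
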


The statements i)--iv), vii) and viii) follow easily from the corresponding
assumptions (cf.~\cite{AEM-2017, EGC}).
The claims v), x) and xii) are already 
argued in the first paper on abstract Friedrichs operators \cite{EGC} for real vector spaces
(see sections 2 and 3 there), while in \cite{ABCE} 
the arguments are repeated in the complex setting.
The same applies for vi) with a remark that for a further structure 
of indefinite inner product space $(\lW, \iscp{\cdot}{\cdot})$ we refer to 
\cite{ABcpde}. 
The decomposition given in ix) is derived in \cite[Theorem 3.1]{ES22},
while for additional claims on projectors we refer to the proof of Lemma 3.5 in the 
aforementioned reference. In the same reference one can find the proof 
of part xiii) (Lemma 3.10 there).
Finally, a characterisation of mutually self-adjointness xi) is obtained 
in \cite[Theorem 9]{AEM-2017}.

\subsection{Additional remarks}

Let us close this section with a few remarks concerning the statements of 
the previous theorem. 


\begin{remark}\label{rem:Vcond}
Not all bijective realisations, characterised in part xiii), are bijective realisation with signed 
boundary map (described in part xii)), 
i.e.~$\lV\subseteq\lW^+$ and $\lV^{[\perp]}\subseteq\lW^-$ is only a sufficient condition.
However, we have the following equivalence: $T_1|_\lV$ is bijective with $\lV\subseteq\lW^+$ if and only if $\widetilde{T}_1|_{\lV^{[\perp]}}$
is bijective with $\lV^{[\perp]}\subseteq\lW^-$. Thus, there is no need in considering pairs of bijective 
realisations with signed boundary map, but we can denote (in this case) each of $T_1|_\lV$ and $\widetilde{T}_1|_{\lV^{[\perp]}}$
as a bijective realisation with signed boundary map of $T_1$ and $\widetilde{T}_1$, respectively. 
Let us comment on this. 

Since $(T_1|_\lV)^*=\widetilde{T}_1|_{\lV^{[\perp]}}$ (see part xi)) and each of spaces $\lV$, $\lV^{[\perp]}$ and $\lH$
is complete, we have equivalence on the level of bijectivity. 
Let us assume that $\lV\subseteq\lW^+$ and let us denote by $\lV_1$ a maximal subspace of $\lW$ such that
$\lV\subseteq\lV_1\subseteq\lW^+$ (it exists by Zorn's lemma; cf.~\cite[Section I.6]{Bo}).
Then $\lV_1^{[\perp]}\subseteq \lW^-$ (cf.~\cite[Lemma 6.3]{Bo} or \cite[Theorem 2(b)]{ABcpde}
for a perspective in the context of abstract Friedrichs operators), hence $T_1|_{\lV_1}$ is bijective as well. 
Since $\lV\subseteq\lV_1$, it must be $\lV=\lV_1$. Hence, $\lV^{[\perp]}=\lV_1^{[\perp]}\subseteq\lW^-$.
The opposite implication can be proved analogously. 

The previous argument actually shows that a subspace $\lV\subseteq\lW^+$ with the property of
$T_1|_\lV$ being bijective is maximal nonnegative subspace, i.e.~if for a subspace $\lV_1\subseteq\lW$ we have $\lV\subseteq\lV_1\subseteq\lW^+$, then it must be $\lV=\lV_1$. Then it is known that such $\lV$
defines a pair of bijective realisations with signed boundary map (see \cite[Theorem 2]{ABcpde}).

Let us close this remark by recalling that for any joint pair of abstract Friedrichs operators there 
exist bijective realisations with signed boundary map 
(see \cite[Theorem 13(i)]{AEM-2017} and \cite[Corollary 3.2]{ES22}).
\end{remark}

\begin{remark}\label{rem:degen}
In part vi) it is commented that for any subset $X$ of $\lW$ we have
that $X\subseteq X^{[\perp]}$ implies $X\subseteq \lW^+\cap\lW^-$.
On the other hand, if $X$ is a subpsace, then the converse holds as well
by the means of the polarisation formula (cf.~\cite[(2.3)]{Bo}).
\end{remark}

\begin{remark}
	If $\lV$ is a subspace of $\lW$ satisfying $\lV=\lV^{[\perp]}$, then $\lV\subseteq \lW^+\cap\lW^-$ (see part vi)).
	Thus, $\lV$ fulfills all assumptions of part xii) implying that 
	the corresponding realisations are bijective realisations with signed boundary map. 
	Since such realisations are of particular interest (see Introduction),
	we will pay attention to examining when such realisations exist
	(see corollaries \ref{cor:eqdom} and \ref{cor:eqdom-number}).
\end{remark}

\begin{remark}\label{rem:kernels_hilbert}
Note that in part ix) of the previous theorem we can also consider the graph norm 
in the codomain of projections, since the graph norm and the (standard) 
norm are equivalent on the kernels $\ker T_1$ and $\ker\widetilde{T}_1$.
Moreover, another equivalent norm on $\ker T_1$ is $\sqrt{-\iscp{\cdot}{\cdot}}$
(for $\nu,\mu\in\ker T_1$ we have $-\iscp{\nu}{\mu}=\scp{\nu}{(\overline{T_0+\widetilde{T}_0})\mu}$),
while on $\ker \widetilde{T}_1$ we can take  $\sqrt{\iscp{\cdot}{\cdot}}$.
In particular, $(\ker T_1, -\iscp{\cdot}{\cdot})$ and $(\ker \widetilde{T}_1,\iscp{\cdot}{\cdot})$ are Hilbert spaces. 

A more detailed point of view can be found in \cite{ABcpde} where the authors recognised that the 
quotient space $\widehat{\lW}:=\lW/\lW_0$ is a Kre\u\i n space (cf.~\cite{Bo}), while the pair of subspaces 
$(\widehat{\ker \widetilde{T}_1}, \widehat{\ker T_1})$ represents a fundamental decomposition 
of $\widehat{\lW}$ (cf.~\cite[remarks 2.13(iii) and 3.3(ii)]{ES22}).
\end{remark}

\begin{remark}\label{rem:lWplus}
It holds $\lW^+\cap\ker T_1=\lW^-\cap\ker\widetilde{T}_1=\{0\}$.
Indeed, let $\nu \in  \lW^+\cap\ker T_1$. Then $-\iscp{\nu}{\nu}\leq 0$,
but since $(\ker T_1, -\iscp{\cdot}{\cdot})$ is a Hilbert space,
we get $-\iscp{\nu}{\nu}=0$, implying $\nu=0$. The second identity is proved in 
the same way. 
\end{remark}

\begin{remark}
A pair of operators satisfying (T1), and thus any pair of abstract Friedrichs operators, is a special case of \emph{dual} or \emph{symmetric} pairs of operators, where in general it is not required that the operators have the same domains (cf.~\cite{Grubb-1968, JP17}). However, the assumption on common domains is not overly restrictive from the perspective of applications to partial differential equations. Indeed, in this setting a standard choice for the ambient Hilbert space $\lH$ is the Lebesgue space of square integrable functions, while for the domain $\lD$ of minimal operators the space of smooth functions with compact support (cf.~\cite{ABcpde, EGC}). 
\end{remark}

\section{Characterisation of abstract Friedrichs operators}\label{sec:charact}

\subsection{Characterisation}

Abstract Friedrichs operators were introduced in \cite{EGC}, while its reformulation completely 
in the spirit of Hilbert spaces was provided in \cite{AEM-2017}. 
Now we make a step forward by deriving the following simple and explicit characterisation that will allow us to connect the theory of abstract Friedichs operators with the well-established theory for (skew-)symmetric operators.

\begin{theorem}\label{thm:abstractFO_L0+S}
A pair of densely defined operators $(T_0,\widetilde{T}_0)$ on $\lH$ 
satisfies {\rm (T1)} and {\rm (T2)} if and only if there exist a densely defined skew-symmetric operator $L_0$ and a bounded self-adjoint operator $S$,
both on $\lH$, such that
\begin{equation}\label{eq:abstractFO_L0+S}
	T_0=L_0+S \qquad \hbox{and} \qquad \widetilde{T}_0=-L_0+S \,.
\end{equation}
For a given pair, the decomposition \eqref{eq:abstractFO_L0+S}
is unique. 

If in the above we include condition {\rm (T3)}, then the same holds with $S$ being strictly positive, i.e.
\begin{equation*}
	\scp{S u}{u} \geq \mu\|u\|^2 \;, \quad u\in\lH\,,
\end{equation*}
where $\mu>0$ is the constant appearing in {\rm (T3)}.
\end{theorem}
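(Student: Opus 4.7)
The plan is to prove the two directions separately, with the forward direction being the substantive one and the backward direction essentially a verification. For the forward direction, given a pair $(T_0,\widetilde{T}_0)$ satisfying (T1) and (T2), I would define on the common domain $\mathcal{D}$ the operators
\begin{equation*}
L_0 := \tfrac{1}{2}(T_0-\widetilde{T}_0) \qquad\hbox{and}\qquad S_0 := \tfrac{1}{2}(T_0+\widetilde{T}_0) \,,
\end{equation*}
so that trivially $T_0 = L_0+S_0$ and $\widetilde{T}_0 = -L_0+S_0$. Using (T1), a one-line computation in the inner product shows that $L_0$ is skew-symmetric and $S_0$ is symmetric on $\mathcal{D}$, which is dense in $\mathcal{H}$. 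Condition (T2) then reads $\|S_0\phi\|\leq\lambda\|\phi\|$, so $S_0$ extends uniquely to an everywhere-defined bounded operator $S$ on $\mathcal{H}$, which is self-adjoint by Hellinger-Toeplitz (or directly, since a bounded densely defined symmetric operator extends to a bounded self-adjoint one). I would then redefine $L_0 := T_0 - S$, still with dense domain $\mathcal{D}$, obtaining the required decomposition \eqref{eq:abstractFO_L0+S}.

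For the converse, assuming $L_0$ is densely defined skew-symmetric and $S\in\mathcal{L}(\mathcal{H})$ is self-adjoint, I would take $T_0 := L_0+S$ and $\widetilde{T}_0 := -L_0+S$, both with domain $\dom L_0$. The skew-symmetry of $L_0$ combined with the self-adjointness of $S$ immediately yields (T1), while $(T_0+\widetilde{T}_0)\phi = 2S\phi$ gives (T2) with constant $\lambda = \|S\|$. Uniqueness of the decomposition follows by adding and subtracting the two identities in \eqref{eq:abstractFO_L0+S}: one recovers $S = \tfrac{1}{2}\overline{T_0+\widetilde{T}_0}$ and $L_0 = T_0 - S$, so $S$ and $L_0$ are determined by the pair.

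Finally, concerning (T3), with the decomposition already in hand the claim is almost immediate: for $\phi\in\mathcal{D}$ one has
\begin{equation*}
\scp{S\phi}{\phi} \;=\; \tfrac{1}{2}\scp{(T_0+\widetilde{T}_0)\phi}{\phi} \;\geq\; \mu\|\phi\|^2 \,,
\end{equation*}
and by density of $\mathcal{D}$ together with continuity of $S$ and of the inner product, this inequality extends to all $u\in\mathcal{H}$. Conversely, if $S$ is strictly positive with lower bound $\mu$, then reversing the identity above shows that $(T_0,\widetilde{T}_0)$ satisfies (T3). I do not expect a serious obstacle here; the only point requiring a small amount of care is the domain bookkeeping when passing from the bounded symmetric $S_0$ defined on $\mathcal{D}$ to its everywhere-defined extension $S$, and the corresponding redefinition of $L_0$ so that the equality $T_0 = L_0+S$ holds on the nose rather than merely on $\mathcal{D}$.
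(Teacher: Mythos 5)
Your proof is correct and proceeds via the same decomposition as the paper: $L_0=\tfrac{1}{2}(T_0-\widetilde{T}_0)$, $S=\tfrac{1}{2}\overline{T_0+\widetilde{T}_0}$, followed by the same uniqueness and converse arguments. The only stylistic difference is in the middle: you establish the skew-symmetry of $L_0$ and the bounded self-adjointness and strict positivity of $S$ directly from (T1)--(T3), whereas the paper invokes Theorem~\ref{thm:abstractFO-prop}(v) and (viii) for the properties of $S$ and then checks skew-symmetry via the adjoint computation $L_0^*=S-T_1=-\tfrac{1}{2}(T_1-\widetilde{T}_1)\supseteq -L_0$; both are sound, yours being a bit more self-contained, the paper's terser given the machinery already in place. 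One minor slip: it is not Hellinger--Toeplitz that makes $S$ self-adjoint (that theorem is the converse statement, symmetric plus everywhere defined implies bounded), but rather the parenthetical fact you state, that a densely defined bounded symmetric operator extends uniquely to a bounded self-adjoint one on all of $\lH$.
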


\begin{proof}
Let $(T_0,\widetilde{T}_0)$ satisfies (T1) and (T2).
First, we will comment on the uniqueness.
Let $L_0, L_0'$ and $S,S'$ be two densely defined skew-symmetric operator and two bounded self-adjoint operators, respectively, such that 
$T_0 = L_0+S = L_0'+S'$.
Since $\dom T_0\subseteq \dom L_0\cap \dom L_0'$, the operator $L_0-L_0'$ is densely defined and both skew-symmetric and symmetric (since coincides with $S'-S$). Thus, it is necessarily equal to the zero operator, implying $L_0=L_0'$ and $S=S'$ (note that the boundedness of $S$ did not play any role).

Let us now proceed with the existence of such operators $L_0$ and $S$. We define 
$S:= \frac{1}{2}(\overline{T_0+\widetilde{T}_0})$, which is a bounded and self-adjoint operator by Theorem \ref{thm:abstractFO-prop}(v). 
Therefore, by
\begin{equation*}
T_0= \frac{T_0-\widetilde{T}_0}{2} + \frac{T_0+\widetilde{T}_0}{2} 
\qquad \hbox{and}\qquad 
\widetilde T_0= -\frac{T_0-\widetilde{T}_0}{2} + \frac{T_0+\widetilde{T}_0}{2} 
\end{equation*}
it is left to prove that $L_0:=\frac{T_0-\widetilde{T}_0}{2}$ is 
skew-symmetric.
Since $\dom L_0 =\dom T_0\cap\dom\widetilde{T}_0=\lD$,
$L_0$ is densely defined. Furthermore, using $L_0=S-\widetilde{T}_0$
and the boundedness of $S$ we get
$$
L_0^* = S^* - \widetilde{T}_0^* = S - T_1 = S|_\lW -T_1
= \frac{T_1+\widetilde{T}_1}{2} - T_1
= -\frac{T_1-\widetilde{T}_1}{2} \supseteq -L_0 \,,
$$
where in addition we used the second part of Theorem \ref{thm:abstractFO-prop}(v).
Finally, if condition (T3) is satisfied 
as well, $S$ has a positive lower bound by Theorem \ref{thm:abstractFO-prop}(viii).

The converse follows easily by direct inspection. 
\end{proof}

\begin{remark}\label{rem:adjoint}
	From the proof of the previous theorem it is easy to see that for general 
	mutually adjoint closed realisations $(T_1|_{\lV},\widetilde{T}_1|_{\lV^{[\perp]}})$
	(see Theorem \ref{thm:abstractFO-prop}(xi))
	we have
	$$
	\bigl((T_1-\widetilde T_1)|_{\lV}\bigr)^* = -(T_1-\widetilde T_1)|_{\lV^{[\perp]}} \,.
	$$
	Note that for $\lV=\lW_0$ we have the identity
	obtained in the proof of the previous theorem.  
\end{remark}

\begin{remark}
Even for a pair of densely defined operators $(T_0,\widetilde{T}_0)$ on $\lH$ satisfying solely (T1),
we can get decomposition \eqref{eq:abstractFO_L0+S}, but with $S$ being only densely defined and symmetric.
Of course, the decomposition is provided for $L_0=(T_0-\widetilde{T}_0)/2$ and $S=(T_0+\widetilde{T}_0)/2$
and it is unique. Observe that the approach for proving existence of the previous theorem is not appropriate here (since $S$ is not bounded), but instead one just needs to notice that for any $\varphi\in\lD$ we have
$$
\scp{L_0 \varphi}{\varphi} = -\scp{\varphi}{L_0\varphi} \quad \hbox{and} \quad
	\scp{S\varphi}{\varphi}=\scp{\varphi}{S\varphi}\,.
$$
\end{remark}

By Theorem \ref{thm:abstractFO_L0+S}, the study of (pairs of) abstract 
Friedrichs operators is reduced to the study of operators of the form
\eqref{eq:abstractFO_L0+S}, which, in our opinion, makes the situation much more explicit (cf.~\cite[Remark 4.3]{CM21}). 
Let us illustrate some straightforward conclusions. 
For a pair of abstract Friedrichs operators $(T_0,\widetilde{T}_0)$, 
let $L_0$ and $S$ be operators given in
Theorem \ref{thm:abstractFO_L0+S}.
If we denote $L_1:=-L_0^*\supseteq L_0$, then we have
\begin{equation}\label{eq:TLSnotation}
\begin{split}
T_0 &= L_0 + S \;, \\
T_1 &= L_1 + S \;, \qquad
\end{split}
\begin{split}
\widetilde{T}_0 &=-L_0+S \;,\\
\widetilde{T}_1 &=-L_1+S \;.
\end{split}
\end{equation}
In particular, $\lW_0=\dom \overline{L}_0$ and $\lW=\dom L_1$, 
i.e.~spaces $\lW_0$ and $\lW$ are independent of $S$. 
This is also clear by noting that the graph norms 
$\|\cdot\|_{T_1}$ and $\|\cdot\|_{L_1}$ are equivalent, 
due to the boundedness of $S$. 
The same holds for the sesquilinear map \eqref{eq:D}
since
\begin{equation}\label{eq:iscp_L1}
\iscp{u}{v} = \scp{L_1 u}{v}+ \scp{u}{L_1v} \;, \quad u,v\in\lW \,.
\end{equation}
Thus, all conditions on subspaces $\lV\subseteq \lW$ given in 
Theorem \ref{thm:abstractFO-prop}(xii) depend only on $L_1$ (i.e.~$L_0$).
In particular, we can formulate the following corollary.

\begin{corollary}\label{cor:signed_bij}
Let $(T_0,\widetilde{T}_0)$ be a joint pair of abstract Friedrichs operators
on $\lH$ and let $\lV\subseteq\lW$ be a closed subspace containing $\lW_0$
such that $\lV\subseteq\lW^+$ and $\lV^{[\perp]}\subseteq\lW^-$ (with respect to $(T_0,\widetilde{T}_0)$).
For any joint pair of abstract Friedrichs operators $(A_0,\widetilde{A}_0)$ 
on $\lH$ such that 
$$
(A_0-\widetilde{A}_0)^*=(T_0-\widetilde{T}_0)^*
$$ 
we have that $\bigl((\widetilde{A}_0)^*|_{\lV},(A_0)^*|_{\lV^{[\perp]}}\bigr)$
is a pair of bijective realisations with signed boundary map.
\end{corollary}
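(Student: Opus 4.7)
The strategy is to use Theorem \ref{thm:abstractFO_L0+S} to reduce every structural object entering Theorem \ref{thm:abstractFO-prop}(xii) to one depending only on the skew-symmetric part, and then to read the hypothesis $(A_0-\widetilde A_0)^*=(T_0-\widetilde T_0)^*$ as the statement that the maximal skew-symmetric realisations of the two pairs coincide.

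First I would write $T_0=L_0+S$, $\widetilde T_0=-L_0+S$ and $A_0=L_0'+S'$, $\widetilde A_0=-L_0'+S'$ with $L_0, L_0'$ densely defined skew-symmetric and $S,S'$ bounded self-adjoint strictly positive, as provided by Theorem \ref{thm:abstractFO_L0+S}. Setting $L_1:=-L_0^*$ and $L_1':=-(L_0')^*$, the assumption becomes $(2L_0')^*=(2L_0)^*$, i.e.\ $L_1=L_1'$; because a closed densely defined operator is determined by its adjoint, this also gives $\overline L_0=\overline{L_0'}$.

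Next I would invoke the observations made immediately after Theorem \ref{thm:abstractFO_L0+S}: the subspaces $\lW_0=\dom\overline L_0$ and $\lW=\dom L_1$, the graph topology (up to equivalence, since $S,S'$ are bounded), and — via formula \eqref{eq:iscp_L1} — the indefinite inner product $\iscp{\cdot}{\cdot}$, together with the cones $\lW^+$, $\lW^-$ and the $[\perp]$-complement of any subspace, are entirely determined by $L_1$. Hence these data are literally the same for the two pairs $(T_0,\widetilde T_0)$ and $(A_0,\widetilde A_0)$.

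Consequently $\lV$ is, with respect to $(A_0,\widetilde A_0)$ as well, a closed subspace of the graph space containing $\lW_0$ and satisfying $\lV\subseteq\lW^+$ and $\lV^{[\perp]}\subseteq\lW^-$. Applying Theorem \ref{thm:abstractFO-prop}(xii) to $(A_0,\widetilde A_0)$ with this $\lV$ yields that $(\widetilde A_0)^*|_\lV=A_1|_\lV$ and $(A_0)^*|_{\lV^{[\perp]}}=\widetilde A_1|_{\lV^{[\perp]}}$ is a pair of bijective realisations with signed boundary map. There is no genuine obstacle; the only point requiring care is the verification that the strictly positive bounded perturbation $S$ plays no role in any of the conditions of Theorem \ref{thm:abstractFO-prop}(xii), so that the hypothesis on the difference $A_0-\widetilde A_0$ is all that is needed to transport the geometric setup from one pair to the other.
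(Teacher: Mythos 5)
Your proof is correct and follows essentially the same route as the paper: the paper does not give a separate formal proof of the corollary but derives it from the preceding discussion, which establishes exactly your key point — that $\lW_0$, $\lW$, the graph topology, and (via equation \eqref{eq:iscp_L1}) the indefinite inner product $\iscp{\cdot}{\cdot}$, hence $\lW^\pm$ and the $[\perp]$-complement, are all determined by $L_1=-L_0^*$ alone and so are unaffected by replacing $S$ with $S'$. You supply the small extra step of translating $(A_0-\widetilde A_0)^*=(T_0-\widetilde T_0)^*$ into $L_1'=L_1$ (and hence $\overline{L_0'}=\overline{L_0}$), which the paper leaves implicit, and then apply Theorem \ref{thm:abstractFO-prop}(xii) to $(A_0,\widetilde A_0)$ exactly as intended; the only minor blemish is the phrase "a closed densely defined operator is determined by its adjoint" applied to $L_0$, which need not be closed — the accurate statement is $\overline{L_0}=L_0^{**}=(L_0')^{**}=\overline{L_0'}$, which is what you in fact use.
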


\begin{remark}\label{rem:m-accretive}
Another perspective to the previous corollary can be made in terms of (linear) \emph{m-accretive} operators \cite[Section 3.3]{Schmudgen}.
To start, note that by the definition of accretive operators on Hilbert spaces for $\lV\subseteq \lW^+$ we have that $L:=L_1|_\lV$ is accretive (see \eqref{eq:lWposneg} and \eqref{eq:iscp_L1}), where we use the notation given in \eqref{eq:TLSnotation}.
If in addition $\lV^{[\perp]}\subseteq\lW^-$ (i.e.~$\lV$ is a maximal nonnegative subspace; see Remark \ref{rem:Vcond}), then $L$ is m-accretive. Indeed, it is sufficient to apply Theorem \ref{thm:abstractFO-prop}(xii) on a pair of abstract Friedrichs operators $(L_0+\mathbbm{1}, -L_0+\mathbbm{1})$ for the identity operator $\mathbbm{1}$ (see Theorem \ref{thm:abstractFO_L0+S}).
Furthermore, since $S$ is bounded and positive, a standard perturbative argument (cf.~\cite[Ch.~3, Corollary 3.3]{Pazy}) implies that $T:=T_1|_\lV$ is m-accretive as well. 
Therefore, the statement of the previous corollary can be seen in the following way: if $L=L_1|_{\lV}$ is m-accretive, then for any $S$ bounded and strictly positive, $L+S$ is also m-accretive. 

On the other hand, if a realisation $T=L+S$ is m-accretive, one can show that then $T$ is a bijective realisation with signed boundary map \cite[Theorem 5.2.2]{Soni24}, and then, by the discussion above, $L$ is also m-accretive.
Thus, we have that the study of bijective realisations with a signed boundary map of abstract Friedrichs operators is tantamount to the study of m-accretive extensions of skew-symmetric operators. This problem of studying m-accretive extensions of skew-symmetric operators has been intensively investigated over the past few decades. For instance, recent studies include
\cite{ACE23, PT24, Trostorff23, WW20}, where various approaches have been applied, as well as different levels of generality (e.g.~linear and/or nonlinear relations). In the following section, we shall make a more precise comparison to our results.
\end{remark}

Not all domains of bijective realisations have the feature described in
the previous corollary, i.e.~there are subspaces $\lV \subseteq \lW$ such that 
realisations $T=L_1|_{\lV}+S$ are bijective for some admissible $S$'es, 
but not all. 
Furthermore, if a subspace $\lV\subseteq\lW$ defines bijective 
realisations for any admissible $S$, that does not imply that 
$\lV\subseteq\lW^+$ and $\lV^{[\perp]}\subseteq\lW^-$.
All this can be illustrated by the following simple example.

\begin{example}\label{exa:1Da}
Let $a<b$, $\lH=L^2((a,b);\R)$ 
(for simplicity we consider only real functions) and $\lD=C^\infty_c(a,b)$.
For $\mu>0$ and $\beta \in L^\infty(a,b)$ such that $\beta\geq\mu$ a.e.~on $(a,b)$,
we consider operators $T_0,\widetilde{T}_0:\lD\to\lH$ given by
$$
T_0u=u'+\beta u \;, \quad \widetilde{T}_0u=-u'+\beta u \,.
$$
Then it is easy to see that $(T_0,\widetilde{T}_0)$ is a joint pair of 
abstract Friedrichs operators, while $\lW= H^1(a,b)$ (which is embedded 
into $C([a,b])$) and $\lW_0=H^1_0(a,b)$.
Of course, in the notation of Theorem \ref{thm:abstractFO_L0+S}, here we have
$L_0u=u'$ and $Su=\beta u$. Then $L_1u := -L_0^*u=u'$ (here the derivative is in the weak sense), $T_1=L_1+S$, $\widetilde{T}_1=-L_1+S$, and
$$
\iscp{u}{v} = u(b)v(b)-u(a)v(a) \;, \quad u,v\in\lW\,.
$$
Let us comment on all bijective realisations of operators $T_0$ and $\widetilde{T}_0$,
i.e.~all bijective restrictions of $T_1$ and $\widetilde{T}_1$.

We define closed subspaces $\lV_\alpha\subseteq \lW$, $\alpha\in \R\cup\{\infty\}$ (here we identify $-\infty$ and $+\infty$), by
\begin{equation*}
	\lV_\alpha := \bigl\{u\in\lW : u(b)=\alpha u(a)\bigr\} \;, \quad \alpha\in\R \,,
\end{equation*}
and $\lV_\infty :=  \bigl\{u\in\lW : u(a)=0 \bigr\}$.
Since $(L_1|_{\lV_\alpha})^*=-L_1|_{\lV_{\frac{1}{\alpha}}}$
(this can be verified by direct calculations), 
we have $(T_1|_{\lV_\alpha})^*=\widetilde{T}_1|_{\lV_{\frac{1}{\alpha}}}$,
where we use the convention: $\frac{1}{\infty} = 0$ and $\frac{1}{0} =\infty$.
Thus, we want to see for which values of $\alpha$,
\begin{equation}\label{eq:ex_Valpha}
\bigl(T_1|_{\lV_\alpha},\widetilde{T}_1|_{\lV_{\frac{1}{\alpha}}}\bigr) 
\end{equation}
is a pair of mutually adjoint bijective realisations (with signed boundary map). 

By \cite[Remark 5.1]{ES22} we have that \emph{all} mutually adjoint bijective realisations are given for $\alpha\in \R\cup\{\infty\}\setminus \{\alpha_\beta\}$,
where $\alpha_\beta:=e^{-\int_a^b\beta(y)dy}$.
Indeed, since 
$\ker T_1=\operatorname{span}\bigl\{e^{-\int_a^x \beta(y)dy}\bigr\}$
we have $\ker T_1\subseteq \lV_{\alpha_\beta}$.
Note that for $\alpha\in\{-1,1\}$ we have $\lV_\alpha=\lV_{\frac{1}{\alpha}}$, hence
$L_1|_{\lV_\alpha}$ is skew-selfadjoint.

By direct inspection we get that only for $\alpha\in (-1,1)$ 
bijective realisations are not 
with signed boundary map (one can also consider $\beta\equiv\mu$ by 
Corollary \ref{cor:signed_bij} and read the result from \cite[Example 1]{ABmn}).
This is in the correspondence with the above result since $\alpha_\beta\in (-1,1)$.
More precisely, we have $\alpha_\beta\in (0,1)$ and by varying $\mu$ and $\beta$
one can get any number in that interval for $\alpha_\beta$. 

Therefore, for $\alpha\in \R\cup\{\infty\}\setminus (-1,1)$ the corresponding 
domains, i.e.~boundary conditions, give rise to bijective realisations 
independent of the choice of admissible $\beta$ (see Corollary \ref{cor:signed_bij}).
The same holds for $\alpha\in (-1,0]$ although these bijective realisations
are not with signed boundary map. Here the reason lies in the fact that 
$\lV_\alpha\cap\ker T_1=\{0\}$ (cf.~Theorem \ref{thm:abstractFO-prop}(xiii))
for any $\alpha\in (-1,0]$ and any choice of
admissible $\beta$.
Finally, there is no $\alpha$ in $(0,1)$ with this property. However, for 
fixed $\beta$, all $\alpha\in (0,1)$ but one ($\alpha=\alpha_\beta$) 
correspond to mutually adjoint bijective realisations \eqref{eq:ex_Valpha}.

We will return to this example to consider general symmetric parts.
\end{example}

\begin{remark}\label{rem:L0+C}
If we consider $T_0=L_0+C$ where $L_0$ is skew-symmetric and $C$ bounded (i.e.~$C$ is not necessarily self-adjoint), then 
it is easy to see that the discussion preceding Corollary \ref{cor:signed_bij} still holds. 
More precisely, spaces $\lW_0$, $\lW$ and indefinite inner product $\iscp{\cdot}{\cdot}$
are independent of $C$ (\eqref{eq:iscp_L1} holds precisely as it is; recall that $L_1=-L_0^*$) and
the graph norm is equivalent with $\|\cdot\|_{L_1}$ (cf.~\cite[Subsection 2.2]{BEW23}).
Thus, these objects depend solely on the unbounded component of the skew-symmetric part of $T_0$. 
\end{remark}

\subsection{Deficiency indices}

The previous example illustrates that in order to get all bijective realisations
(not only with signed boundary map) it is not enough to consider $L_0$ alone, 
we must also bring the symmetric part $S$ into play.
In particular, information on kernels $\ker T_1$ and $\ker \widetilde{T}_1$ is 
essential. 
By Theorem \ref{thm:abstractFO-prop}(ix) we have 
\begin{equation*}
	\lW \;=\; \lW_0 \dotplus \ker T_1 \dotplus \ker\widetilde T_1 \;.
\end{equation*}
Hence, the sum of dimensions of the kernels is constant and equals the 
codimension of $\lW_0$ in $\lW$. However, from here we cannot conclude that
(cardinal) numbers $\dim \ker T_1$ and $\dim\ker\widetilde{T}_1$ are independent of
$S$, where $T_1=L_1+S$. 
If so, this would be beneficial in the analysis (see Example \ref{exa:1Db} below). 
Let us motivate why one should expect such a result. 
Since $L_0$ is skew-symmetric, we have that $-iL_0$ is symmetric. 
Thus, for any positive constant $\beta>0$ we have
$$
\dim\ker (L_1+\beta\mathbbm{1}) = \dim\ker (iL_0^*-i\beta\mathbbm{1})
	= \dim\ker ((-iL_0)^*-i\beta\mathbbm{1})= d_+(-iL_0)  \,,
$$
where $\mathbbm{1}$ denotes the identity operator and on the right we have the deficiency index (or the defect number)
of $-iL_0$, which is known to be independent of $\beta>0$ 
(see \cite[Section 3.1]{Schmudgen}).
Analogously, $\dim\ker (L_1-\beta\mathbbm{1})= d_-(-iL_0)$.
Therefore, all that we need is to show that instead of $\beta\mathbbm{1}$ 
we can put an arbitrary bounded self-adjoint strictly positive
operator. 
Below is a slightly more general statement.

\begin{lemma}\label{lem:defect}
Let $L_0$ be a densely defined skew-symmetric operator and let 
us denote $L_1:=-L_0^*$. For a bounded linear operator $C$ with strictly positive 
symmetric part $\frac{1}{2}(C+C^*)$, we define
\begin{equation*}
d^C_+(L_0):=\dim\ker (L_1+C) \quad \hbox{and} \quad	
	d^C_-(L_0):=\dim\ker (L_1-C) \,. 
\end{equation*} 
Then $d^C_+(L_0)$ and $d^C_-(L_0)$ are independent of $C$,
i.e.~$d_\pm^C(L_0)=d_\pm(-iL_0)$. 
\end{lemma}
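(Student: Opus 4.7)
The plan is to use Kre\u\i n space theory. The paragraph preceding the lemma already identifies $d_\pm(-iL_0) = \dim\ker(L_1 \pm \beta\mathbbm{1})$ for any $\beta > 0$ via the classical deficiency-index theory applied to the symmetric operator $-iL_0$. Thus it remains to show that $\dim\ker(L_1 + C)$ does not depend on the choice of bounded $C$ with strictly positive symmetric part (and analogously for $L_1 - C$). The key observation is that the Kre\u\i n space $(\widehat{\lW}, \iscp{\cdot}{\cdot})$ from Remark \ref{rem:kernels_hilbert} is a $C$-independent object, while different admissible $C$'s produce different fundamental decompositions of it; the classical invariance of the positive and negative ranks then seals the argument.

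First I would verify that $(L_0 + C,\, -L_0 + C^*)$ is a joint pair of abstract Friedrichs operators: (T1) is a one-line calculation using skew-symmetry of $L_0$ and the defining relation of $C^*$, while (T2) and (T3) follow from boundedness and strict positivity of $\frac{1}{2}(C+C^*)$, respectively. Theorem \ref{thm:abstractFO-prop}(ix) then yields
\[
\lW \;=\; \lW_0 \;\dotplus\; \ker(L_1 + C) \;\dotplus\; \ker(L_1 - C^*) \,.
\]
A direct calculation shows $\lW = \dom L_1$, $\lW_0 = \dom\overline{L}_0$, and $\iscp{u}{v} = \scp{L_1 u}{v} + \scp{u}{L_1 v}$ (analogously to \eqref{eq:iscp_L1}), since the extra $C$-terms cancel via the definition of adjoint. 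In particular, the Kre\u\i n space $\widehat{\lW} = \lW / \lW_0$ does not depend on $C$.

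Next, using $\iscp{u}{u} = 2\re\scp{L_1 u}{u}$, for $u \in \ker(L_1+C)$ I obtain $\iscp{u}{u} = -2\re\scp{Cu}{u} \leq -2\mu\|u\|^2$, so $\ker(L_1 + C) \subseteq \lW^-$; symmetrically, $\ker(L_1 - C^*) \subseteq \lW^+$. The direct sum decomposition forces $\ker(L_1 + C) \cap \lW_0 = \ker(L_1 - C^*) \cap \lW_0 = \{0\}$, so passing to the quotient produces a fundamental decomposition
\[
\widehat{\lW} \;=\; \widehat{\ker(L_1 - C^*)} \;\oplus\; \widehat{\ker(L_1 + C)} \,,
\]
of the fixed Kre\u\i n space $\widehat{\lW}$, with positive and negative parts as marked.

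Invoking the classical invariance of the positive and negative ranks of a Kre\u\i n space across any choice of fundamental decomposition (see~\cite{Bo}) then gives $\dim\ker(L_1 + C) = \dim\ker(L_1 + \beta\mathbbm{1}) = d_+(-iL_0)$. For $d^C_-(L_0) = \dim\ker(L_1 - C)$ the same argument is repeated with the equally admissible pair $(L_0 + C^*,\, -L_0 + C)$, in which $\widehat{\ker(L_1 - C)}$ now plays the role of the positive part. The main delicate point I expect is the verification that $(\widehat{\lW}, \iscp{\cdot}{\cdot})$ really is $C$-independent; this reduces to the cancellation of $C$ in the formula for $\iscp{\cdot}{\cdot}$ via the definition of the adjoint, in the same spirit as the discussion preceding Corollary \ref{cor:signed_bij}, but needs to be spelled out for the non-self-adjoint $C$ considered here.
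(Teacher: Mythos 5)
Your proposal is correct, and it takes a genuinely different route from the paper's own proof. The paper argues analytically: it first proves the statement under the additional hypothesis $\|C-C'\|<\min\{\mu,\mu'\}$ by supposing, for contradiction, that $d_+^C>d_+^{C'}$, producing via \cite[Lemma 2.3]{Schmudgen} a nonzero $\nu\in\ker(L_1+C)\cap\ker(L_1+C')^\perp$, rewriting $\ker(L_1+C')^\perp=\ran(-L_0+C')$ via Theorem~\ref{thm:abstractFO-prop}(x), and deriving a norm estimate $\mu'\le\|C'-C\|$ that contradicts the smallness assumption; the general case is then reached through a finite chain $C_{\lambda_1},\dots,C_{\lambda_m}$ along the segment from $C'$ to $C$, using convexity of the class of bounded operators with strictly positive symmetric part. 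Your argument is instead structural: you fix the Kre\u\i n space $\widehat{\lW}=\lW/\lW_0$ once and for all (legitimate, since by Remark~\ref{rem:L0+C} the triple $(\lW,\lW_0,\iscp{\cdot}{\cdot})$ depends only on the unbounded skew-symmetric part $L_0$), observe that each admissible $C$ yields, through the joint pair $(L_0+C,-L_0+C^*)$ and Theorem~\ref{thm:abstractFO-prop}(ix), the fundamental decomposition $\widehat{\lW}=\widehat{\ker(L_1-C^*)}\oplus\widehat{\ker(L_1+C)}$, and then invoke the classical invariance of the positive and negative ranks of a Kre\u\i n space across all of its fundamental decompositions (which is precisely the inertia theorem from~\cite{Bo}). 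The swap $(C,C^*)\mapsto(C^*,C)$ for $d^C_-$ is then immediate.

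The comparison is instructive. Your route is shorter and explains \emph{why} the invariance holds — it is exactly the Kre\u\i n-space inertia theorem for the fixed quotient $\widehat{\lW}$ — and it makes unnecessary both the smallness/connectedness bootstrap and the use of \cite[Lemma 2.3]{Schmudgen}. The price is that you lean on the uniqueness (up to equivalence) of fundamental decompositions, a nontrivial piece of Kre\u\i n space theory which the paper cites in Remark~\ref{rem:kernels_hilbert} but does not develop; the paper's proof, by contrast, is self-contained at the Hilbert-space level, and its perturbative estimate $\mu'\le\|C-C'\|$ carries quantitative content that the abstract argument does not. One small point worth spelling out in your write-up: the injectivity of the quotient map on $\ker(L_1+C)$ (hence $\dim\widehat{\ker(L_1+C)}=\dim\ker(L_1+C)$) should be stated explicitly — it follows from the directness of the decomposition in Theorem~\ref{thm:abstractFO-prop}(ix) — and you should record that the indefinite inner product on $\widehat{\lW}$ is well-defined because $\lW_0=\lW^{[\perp]}$, so the quotient map is an $\iscp{\cdot}{\cdot}$-isometry on each kernel.
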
 

\begin{proof}
Since $L_0$ is closable and $d_\pm^C(L_0)=\dim\ker (L_1\pm C)=d_\pm^C(\overline{L_0})$, 
we can assume that $L_0$ is closed.
We shall prove the claim for $d_+^C(L_0)$, while the same argument 
applies on $d_-^C(L_0)$. 

Let us take arbitrary bounded operators
$C$ and $C'$ with strictly positive symmetric parts, 
and let us denote by $\mu$ and $\mu'$ the greatest
lower bounds of their symmetric parts, respectively. 
We shall first argue in a specific situation when $\|C-C'\|<\min\{\mu,\mu'\}$,
where here $\|\cdot\|$ denotes the operator norm. 
Before we start, let us note that according to Theorem \ref{thm:abstractFO_L0+S},
both operators $-L_0+C^*$ and $-L_0+(C')^*$ define a (pair of) abstract Friedrichs operators (the skew-symmetric part is equal to the sum of $L_0$ (unbounded part) and the skew-symmetric component of $C$ or $C'$ (bounded part) with the negative sign). 
Hence, all results of Theorem \ref{thm:abstractFO-prop} are applicable. 
In particular, since $L_0$ is assumed to be closed, 
part (x) of the aforementioned theorem implies that 
the ranges of the operators $-L_0 + C^*$ and $-L_0 + (C')^*$ are closed.

If $d_+^C(L_0) > d_+^{C'}(L_0)$, then there exists 
$0\neq \nu\in \ker (L_1+C)\cap\ker(L_1+C')^\perp$
(cf.~\cite[Lemma 2.3]{Schmudgen}).
Since $\ran(-L_0+(C')^*)$ is closed, we have $\ker(L_1+C')^\perp=\ran (-L_0+(C')^*)$.
Thus, we also have $\nu\in \ran(-L_0+C^*)^\perp\cap\ran(-L_0+(C')^*)$. 
Let $0\neq u\in\dom L_0$ be such that $\nu=(-L_0+(C')^*)u$.
Then it holds
\begin{equation}\label{eq:d+d-}
\scp{(-L_0+(C')^*)u}{(-L_0+C^*)u} = 0 \,.
\end{equation}

Applying Theorem \ref{thm:abstractFO-prop}(x) and the identity above, we have
\begin{align*}
\mu'\|u\| \|(-L_0+(C')^*)u\| &\leq \|(-L_0+(C')^*)u\|^2 \\
&= \scp{(-L_0+C^*)u+(C'-C)^*u}{(-L_0+(C')^*)u} \\
&= \scp{u}{(C'-C)(-L_0+(C')^*)u}\\
&\leq \|u\|\|C'-C\|\|(-L_0+(C')^*)u\| \,.
\end{align*}
Since $u\neq 0$ and $\nu=(-L_0+(C')^*)u\neq 0$, this implies $\|C'-C\|\geq \mu'$, 
which contradicts the starting assumption $\|C'-C\|<\mu'$. 
Hence, it should be $d_+^C(L_0) \leq d_+^{C'}(L_0)$.

Since the identity \eqref{eq:d+d-} is symmetric with respect to $C$ and $C'$, 
the same holds even if we start with the assumption $d_+^C(L_0) < d_+^{C'}(L_0)$.
However, then we have $(-L_0+C^*)u\neq 0$. Repeating the last calculations 
with $C'$ and $C$ swapping places, we come to the analogous conclusion:
$\mu\leq \|C-C'\|<\mu$. Therefore, it must be $d_+^C(L_0) = d_+^{C'}(L_0)$.

It is left to prove the statement without the additional assumption 
$\|C-C'\|<\min\{\mu,\mu'\}$. This easily follows by noting that the set of all bounded operators on $\lH$ with strictly positive symmetric part is convex. More precisely, 
for each $\lambda\in [0,1]$ we have that 
$C_\lambda:=\lambda C+(1-\lambda)C'$ is bounded and 
the greatest lower bound of its symmetric part is $\lambda\mu+(1-\lambda)\mu'\geq \min\{\mu,\mu'\}$.
Moreover, $\|C_{\lambda_1}-C_{\lambda_2}\| = |\lambda_1-\lambda_2|\|C-C'\|$.
Thus, we can pick finitely many values $0=\lambda_1<\lambda <\dots<\lambda_m=1$ such that
$\|C_{\lambda_j}-C_{\lambda_{j+1}}\|<\min\{\mu,\mu'\}$, $j=1,2,\dots,,m-1$. 
Therefore, by applying the previously obtained result, we get
$$
d_+^{C'}(L_0)=d_+^{C_{\lambda_1}}(L_0)=d_+^{C_{\lambda_2}}(L_0)=\dots=d_+^{C_{\lambda_m}}(L_0)
	=d_+^{C}(L_0) \,,
$$
concluding the proof. 
\end{proof}

\begin{remark}
The stability of the indices $d_\pm^C(L_0)$ with respect to $C$ is, in fact, a consequence of the homotopy argument. Indeed, for a fixed $C$, let us consider $H_\pm(C',\lambda)= d_\pm^{(1-\lambda)C+\lambda C'}(L_0)$, where $C$ runs through all bounded linear operators with a strictly positive symmetric part and $\lambda\in[0,1]$. Then one just needs to show that $H_\pm$ behave well (i.e.~in a continuous manner) with respect to $\lambda$, which is precisely what we studied in the proof.
\end{remark}

\begin{remark}
For a densely defined skew-symmetric operator $L_0$ we will refer to the cardinal numbers $d_\pm(-iL_0)$ as the deficiency indices (or the defect numbers) of $L_0$, 
and we introduce the notation $d_\pm(L_0):=d_\pm(-iL_0)$.
The definition is not ambiguous because depending on whether the operator is 
symmetric or skew-symmetric the corresponding definition applies.
\end{remark}

Let us return to the analysis of Example \ref{exa:1Da}.
 
\begin{example}\label{exa:1Db}
In Example \ref{exa:1Da} we studied specific (multiplicative) symmetric parts.
Let us now consider a general situation where 
$$
T_0u=u'+C u \;, \quad \widetilde{T}_0u=-u'+C u \,,
$$
for an arbitrary bounded linear operator $C$ with strictly positive 
symmetric part $\frac{1}{2}(C+C^*)$.

First note that by Example \ref{exa:1Da} (cf.~\cite[Subsection 6.1]{AEM-2017}) 
and Lemma \ref{lem:defect} we have
$\dim \ker T_1 = \dim \widetilde{T}_1 = 1$ (for any admissible $C$).
 
The conclusion of Example \ref{exa:1Da} for the range $\R\cup\{\infty\}\setminus (-1,1)$ remains the same (cf.~Corollary \ref{cor:signed_bij}), i.e.~for 
these values of $\alpha$ we get for any admissible $C$ bijective realisations
(even with signed boundary map). 

Let us take $\alpha\in (-1,0)$. Since the codimension of $\lV_\alpha$ in $\lW$ 
equals 1, and $\dim\ker T_1=1$, 
by Theorem \ref{thm:abstractFO-prop}(xiii) it is sufficient to prove that
$\lV_\alpha$ and $\ker T_1=\operatorname{span}\{\varphi_C\}$ have a trivial intersection 
to get that the corresponding realisations are bijective. 
Let us assume on the contrary that $\varphi_C\in \lV_\alpha$. 
Since $\alpha<0$, we have $\varphi_C(a)\varphi_C(b)<0$. Thus, recalling 
that $\lW\hookrightarrow C([a,b])$, there exists $c\in (a,b)$ such that 
$\varphi_C(c)=0$. Moreover, $\varphi_C\in\ker T_1$ implies that 
$\varphi_C'+C\varphi_C =0$ in $(c,b)$ as well. This together with $\varphi_C(c)=0$
implies that $\varphi_C\equiv 0$ in $(c,d)$. Indeed, just recall that $\lV_\infty$
defines a bijective realisation. In particular, we have $\varphi_C(b)=0$, implying 
$\alpha=0$, which is a contradiction. 
Therefore, for any $\alpha\in (-1,0)$ we get bijective realisations independently 
of the choice of $C$. 
Note that here we were not able to capture the value $\alpha=0$. 

The argument given in Example \ref{exa:1Da} is sufficient to conclude that 
in general there is no $\alpha$ in $(0,1)$ with the property that the pair 
of domains $(\lV_\alpha,\lV_{\frac{1}{\alpha}})$ gives rise to (mutually adjoint) bijective realisations \eqref{eq:ex_Valpha} for any choice of admissible $C$. 
On the other hand, for fixed $C$, since $\dim\ker T_1=1$, there exists precisely one $\alpha=\alpha_C\in [0,1)$ for which the corresponding realisations are not bijective. 
Even though we have bijective realisations for all other values of $\alpha$, i.e.~$\alpha\in [0,1)\setminus\{\alpha_C\}$, this case still depends on $C$.
\end{example}

A particularly interesting case of bijective realisations with signed boundary map
is when $\lV=\lV^{[\perp]}$ (see Introduction). By Remark \ref{rem:adjoint} this occurs if and only 
if the associated realisation of the skew-symmetric part $L_0$ is skew-selfadjoint.
Thus, such subspace $\lV$ exists if and only if $d_+(L_0)=d_-(L_0)$ (see 
\cite[Theorem 13.10]{Schmudgen}). 
Applying Lemma \ref{lem:defect} we can formulate the following corollary.

\begin{corollary}\label{cor:eqdom} 
	Let $(T_0,\widetilde T_0)$ be a joint pair of abstract Friedrichs operators on $\lH$. 
	There exists a closed subspace $\lV$ of $\lW$ with $\lW_0\subseteq \lV$ and such that $(T_1|_{\lV},\widetilde{T}_1|_{\lV})$ is a pair of mutually adjoint bijective realisations related to $(T_0,\widetilde{T}_0)$ if and only if 
	$\ker T_1$ and $\ker\widetilde{T}_1$ are isomorphic.
\end{corollary}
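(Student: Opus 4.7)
The plan is to reduce the claim to the classical von Neumann existence criterion for self-adjoint extensions of symmetric operators, via the characterisation in Theorem~\ref{thm:abstractFO_L0+S} and the defect-number invariance of Lemma~\ref{lem:defect}. First I would invoke Theorem~\ref{thm:abstractFO_L0+S} to write $T_0 = L_0 + S$ and $\widetilde T_0 = -L_0 + S$ with $L_0$ densely defined skew-symmetric and $S$ bounded self-adjoint strictly positive, so that $T_1 = L_1 + S$ and $\widetilde T_1 = -L_1 + S$ with $L_1 := -L_0^*$. The key bridge between the two formulations is the observation already sketched in the paragraph preceding the corollary: for a closed subspace $\lV \subseteq \lW$ containing $\lW_0$, Remark~\ref{rem:adjoint} gives $(L_1|_\lV)^* = -L_1|_{\lV^{[\perp]}}$, so $\lV = \lV^{[\perp]}$ is equivalent to $L_1|_\lV$ being skew-selfadjoint.

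Next I would handle both implications. For $(\Rightarrow)$: by Theorem~\ref{thm:abstractFO-prop}(xi), mutual adjointness of $(T_1|_\lV,\widetilde T_1|_\lV)$ forces $\lV = \lV^{[\perp]}$, hence $L_1|_\lV$ is skew-selfadjoint, so $-iL_0$ admits a self-adjoint extension and thus $d_+(L_0) = d_-(L_0)$; Lemma~\ref{lem:defect} applied with $C = S$ (admissible since $S$ is bounded self-adjoint and strictly positive) identifies these numbers with $\dim\ker T_1$ and $\dim\ker\widetilde T_1$, giving the isomorphism in the Hilbert-space sense of Remark~\ref{rem:kernels_hilbert}. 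For $(\Leftarrow)$: starting from $\dim\ker T_1 = \dim\ker\widetilde T_1$, Lemma~\ref{lem:defect} yields $d_+(-iL_0) = d_-(-iL_0)$, and the classical von Neumann theorem (e.g.~\cite[Theorem~13.10]{Schmudgen}) produces a skew-selfadjoint extension $L$ of $L_0$. A short computation shows $L \subseteq L_1$: for $u \in \dom L$ and $\varphi \in \lD \subseteq \dom L$, skew-symmetry gives $\scp{Lu}{\varphi} = -\scp{u}{L_0\varphi}$, so $u \in \dom L_0^* = \dom L_1$ with $L_1 u = L u$. Setting $\lV := \dom L$, closedness of $L$ transfers to closedness of $\lV$ in the graph norm on $\lW$, $\lV$ contains $\lW_0$, and Remark~\ref{rem:adjoint} forces $\lV = \lV^{[\perp]}$. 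Then part (vi) of Theorem~\ref{thm:abstractFO-prop} ensures $\lV \subseteq \lW^+\cap\lW^-$, activating Theorem~\ref{thm:abstractFO-prop}(xii) to give bijectivity; mutual adjointness is part (xi).

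The main obstacle is careful bookkeeping rather than a conceptually deep step: one must confirm that the skew-selfadjoint extension produced by the von Neumann theorem is automatically a restriction of the maximal operator $L_1$ with domain closed in the graph topology of $\lW$, so that it is indeed of the form $L_1|_\lV$ for an admissible $\lV$. Once this is verified, both directions assemble mechanically from Theorem~\ref{thm:abstractFO_L0+S}, parts (xi)--(xii) of Theorem~\ref{thm:abstractFO-prop}, Remark~\ref{rem:adjoint} and Lemma~\ref{lem:defect}.
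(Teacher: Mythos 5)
Your proof is correct and takes essentially the same route as the paper: the paragraph preceding Corollary~\ref{cor:eqdom} chains Remark~\ref{rem:adjoint} (equating $\lV=\lV^{[\perp]}$ with skew-selfadjointness of $L_1|_\lV$), the classical von Neumann criterion $d_+(L_0)=d_-(L_0)$, and Lemma~\ref{lem:defect} (identifying these deficiency indices with $\dim\ker T_1$ and $\dim\ker\widetilde T_1$) exactly as you do. You simply unpack the two implications, including the small verification that a skew-selfadjoint extension of $L_0$ is automatically of the form $L_1|_\lV$ with $\lV$ closed in $\lW$ and containing $\lW_0$, which the paper leaves implicit.
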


\begin{remark}\label{rem:isom-char}
	The notion of isomorphism of Hilbert spaces used in the previous corollary is the standard (and natural) one
	(cf.~\cite[I.5.1.~Definition]{Conway}): two Hilbert spaces are \emph{isomorphic} if there exists a linear 
	surjective isometry (\emph{isomorphism} or \emph{unitary transformation}) between them.
	
	One can find several characterisations, e.g.~two Hilbert spaces are isomorphic if and only if
	\begin{itemize}
		\item[i)] they have the same dimension.
		\item[ii)] there exists a linear bounded bijection between them. 
	\end{itemize}
	For the first claim we refer to \cite[I.5.4.~Theorem]{Conway}, while in the latter one needs to discuss only 
	the converse. This can be done in a straigtforward constructive way. Indeed, if we denote by $A$ 
	a linear bounded bijection between two given Hilbert spaces, then $U:=A(A^*A)^{-\frac 12}$ is 
	an isomorphism (in the above sense). 
\end{remark}

\begin{remark}\label{rem:iso-ker}
	Since $\ker \widetilde T_1$ is a Hilbert space when both equipped with $\scp\cdot\cdot$ (the standard inner product of the 
	ambient space $\lH$) and $\iscp\cdot\cdot$ (the indefinite inner product on $\lW$), and the identity map 
	$i:(\ker\widetilde T_1,\scp{\cdot}{\cdot})\to (\ker \widetilde T_1, \iscp{\cdot}{\cdot})$ is continuous
	(due to the boundedness of $T_1+\widetilde T_1$ on $\lH$), 
	it is irrelevant which Hilbert space structure we consider on $\ker \widetilde T_1$ in Corollary \ref{cor:eqdom}.
	The same applies on $\ker T_1$ as well, with the only difference that $\iscp\cdot\cdot$
	should be replaced by $-\iscp\cdot\cdot$.
\end{remark}

\section{Classification of the von Neumann type}\label{sec:classif}

\subsection{Preliminaries}

Applying von Neumann's extension theory of symmetric operators (cf.~\cite[Theorem 13.9]{Schmudgen}) on $-iL_0$, we can classify all skew-selfadjoint
(even closed skew-symmetric) realisations of $L_0$ in terms of
unitary transformations between (closed subspaces of) $\ker (L_1+\mathbbm{1})$
and $\ker (L_1-\mathbbm{1})$. 
Of course, by Lemma \ref{lem:defect} (see also Remark \ref{rem:isom-char})
this can also be done when $\ker (L_1+\mathbbm{1})$ and $\ker (L_1-\mathbbm{1})$
are replaced by $\ker T_1$ and $\ker\widetilde{T}_1$.
We are about to focus on this situation since, as it was demonstrated in examples \ref{exa:1Da} and \ref{exa:1Db}, 
often it is desirable to keep abstract Friedrichs operators $T_0$ and $\widetilde{T}_0$ as a whole (e.g.~not all bijective realisations of $T_0=L_0+S$ correspond to skew-symmetric realisations of $L_0$). Also, in terms of partial differential operators (especially with variable coefficients), it is sometimes easier to work with the operator $L_0+C$, for some $C$, than with skew-symmetric operator $L_0$ itself.
However, we will not make use of \cite[Theorem 13.9]{Schmudgen}, but develop an 
independent constructive proof. This will allow for an explicit classification and 
at the same time provide an alternative proof in the symmetric setting (for even more general situations).

Let us start with the following lemma.

\begin{lemma}\label{lm:isometry}
	Let $(T_0,\widetilde{T}_0)$ be a joint pair of abstract Friedrichs operators 
	on $\lH$. Let $\lV\subseteq\lW$ be a closed subspace containing $\lW_0$
	and let us define $\lG:=p_\mathrm{k}(\lV)$ and 
	$\widetilde{\lG}:=p_{\tilde{\mathrm{k}}}(\lV)$, where $p_\mathrm{k}$ and $p_{\tilde{\mathrm{k}}}$ are given by 
	\eqref{eq:projections}.
	Then, we have the following.
	\begin{itemize}
		 
		\item[i)] $T_1|_\lV$ is a bijective realisation of $T_0$ if and only if $\lV\cap \ker T_1=\{0\}$ and $\widetilde{\lG}=\ker \widetilde{T}_1$.
		
		\item[ii)] Let $\lV\cap \ker T_1=\{0\}$. Then $U: \widetilde{\lG}\to \lG$
		defined by
		\begin{equation}\label{eq:iso-ker-U}
		U(p_{\tilde{\mathrm{k}}}(u))=p_\mathrm{k}(u), \quad  u\in \lV\,, 
		\end{equation}
		is a well-defined closed linear map.
		Moreover, $U$ is bounded if and only if $\widetilde{\lG}$ is closed in $\ker \widetilde{T}_1$ (cf.~Remark \ref{rem:iso-ker}). 
		
		
		\item[iii)] If $\lV\subseteq\lW^+$, then $\widetilde{\lG}$ is closed and $U:(\widetilde{\lG},\iscp{\cdot}{\cdot}) \to
		(\ker T_1,-\iscp{\cdot}{\cdot})$ is non-expansive, i.e.~the norm of $U$ with respect to the indicated norms is less than or equal to $1$.
		
		\item[iv)] If $\lV\subseteq \lW^+\cap\lW^-$, then both $\widetilde{\lG}$ and $\lG$ are closed and 
		$U:(\widetilde{\lG},\iscp{\cdot}{\cdot})\to (\lG,-\iscp{\cdot}{\cdot})$
		 is a unitary transformation (cf.~Remark 
		 \ref{rem:isom-char}).

		 \item[v)]  Let $\lV\cap \ker T_1=\{0\}$. Then $\lV$ coincides with $\lV_U$ given 
		 by
		 \begin{equation}\label{eq:V_U}
		 \lV_U:=\bigl\{u_0+U\tilde\nu + \tilde\nu : u_0\in\lW_0, \, \tilde\nu\in\widetilde{\lG}\bigr\} \,,
		 \end{equation}
		 where $U$ is defined by \eqref{eq:iso-ker-U}, 
		 and $T_1|_\lV(u_0+U\tilde\nu + \tilde\nu) = \overline{T}_0 u_0 + (\overline{T_0+\widetilde{T}_0})\tilde\nu$.
		 
		 Moreover, such $U$ is unique, i.e.~if for a subspace $\widetilde{\lG}\subseteq\lW$ and a 
		 closed operator $U:\widetilde{\lG}\to\ker T_1$ we have $\lV=\lV_U$, 
		 where $\lV_U$ is defined by the formula above, then $U$ is given by \eqref{eq:iso-ker-U}.
	\end{itemize}
\end{lemma}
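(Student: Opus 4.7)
Part i) follows from Theorem \ref{thm:abstractFO-prop}(xiii), which reduces the bijectivity of $T_1|_\lV$ to $\lV \dotplus \ker T_1 = \lW$. The intersection condition $\lV \cap \ker T_1 = \{0\}$ makes the sum direct, while using the decomposition of Theorem \ref{thm:abstractFO-prop}(ix) together with $\lW_0 \subseteq \lV$, the identity $\lV + \ker T_1 = \lW$ is equivalent to $p_{\mathrm{\tilde k}}(\lV) = \ker\widetilde T_1$, i.e.\ $\widetilde\lG = \ker\widetilde T_1$.

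For part ii), well-definedness follows because if $p_{\mathrm{\tilde k}}(u_1) = p_{\mathrm{\tilde k}}(u_2)$ for $u_1,u_2 \in \lV$, then $u_1 - u_2 \in \lW_0 \dotplus \ker T_1$ decomposes as $w_0 + \nu$ with $w_0 \in \lW_0 \subseteq \lV$, forcing $\nu \in \lV \cap \ker T_1 = \{0\}$ and hence $p_{\mathrm k}(u_1) = p_{\mathrm k}(u_2)$. For closedness, given $\tilde\nu_n \to \tilde\nu$ in $\ker\widetilde T_1$ and $U\tilde\nu_n \to \nu$ in $\ker T_1$, I plan to form $u_n := U\tilde\nu_n + \tilde\nu_n$, which lies in $\lV$ by the same reasoning used for well-definedness; since $T_1 u_n = (T_1 + \widetilde T_1)\tilde\nu_n = \overline{T_0+\widetilde T_0}\tilde\nu_n$ by Theorem \ref{thm:abstractFO-prop}(v), boundedness of $\overline{T_0+\widetilde T_0}$ upgrades this to convergence of $u_n$ in the graph norm to $\nu + \tilde\nu$, and closedness of $\lV$ in $\lW$ yields $\nu + \tilde\nu \in \lV$, whence $\tilde\nu \in \widetilde\lG$ and $U\tilde\nu = \nu$. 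The equivalence of ``$U$ bounded'' and ``$\widetilde\lG$ closed'' is then the closed graph theorem in one direction, and a direct Cauchy-sequence argument (using closedness of $U$) in the other.

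For parts iii) and iv), I decompose $u = u_0 + \nu + \tilde\nu \in \lV$ and use $\lW_0 = \lW^{[\perp]}$ together with the pairwise $\iscp{\cdot}{\cdot}$-orthogonality of the summands in Theorem \ref{thm:abstractFO-prop}(ix) to obtain $\iscp{u}{u} = \iscp{\nu}{\nu} + \iscp{\tilde\nu}{\tilde\nu}$. Under the assumption $\lV \subseteq \lW^+$ this gives $\iscp{\tilde\nu}{\tilde\nu} \geq -\iscp{U\tilde\nu}{U\tilde\nu}$, i.e.\ $U$ is a contraction in the indicated norms; closedness of $\widetilde\lG$ then follows from ii). For iv), Remark \ref{rem:degen} upgrades $\lV \subseteq \lW^+ \cap \lW^-$ to $\lV \subseteq \lV^{[\perp]}$, converting the previous inequality to an equality and making $U$ an isometry. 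Surjectivity of $U$ onto $\lG$ is immediate from the definition, and closedness of $\lG$ then follows because $U$ maps the complete space $\widetilde\lG$ (closed by iii)) isometrically onto $\lG$.

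For part v), the inclusion $\lV \subseteq \lV_U$ is immediate from the decomposition of any $u \in \lV$. For the reverse inclusion, the key observation is that for any $u \in \lV$ we can write $u = u_0' + p_{\mathrm k}(u) + p_{\mathrm{\tilde k}}(u)$ with $u_0' \in \lW_0 \subseteq \lV$, so $p_{\mathrm k}(u) + p_{\mathrm{\tilde k}}(u) \in \lV$; applied with $u$ realising a given $\tilde\nu \in \widetilde\lG$, this places $U\tilde\nu + \tilde\nu$ in $\lV$, and adding any $u_0 \in \lW_0 \subseteq \lV$ keeps us inside $\lV$. The action of $T_1$ is read off from $T_1 u_0 = \overline T_0 u_0$, $T_1 U\tilde\nu = 0$, and $T_1\tilde\nu = (T_1 + \widetilde T_1)\tilde\nu = \overline{T_0 + \widetilde T_0}\tilde\nu$. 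Uniqueness follows by assuming $\lV = \lV_{U'}$ for another closed $U': \widetilde\lG' \to \ker T_1$ and evaluating $p_{\mathrm{\tilde k}}$ and $p_{\mathrm k}$ on a generic element of $\lV_{U'}$, which pins down $\widetilde\lG' = \widetilde\lG$ and $U' = U$. The step I expect to be most delicate is the closedness of $U$ in ii), where one must convert convergence of $\tilde\nu_n$ (only in the $\ker\widetilde T_1$ norm) into graph-norm convergence of the auxiliary sequence $u_n$; boundedness of $\overline{T_0 + \widetilde T_0}$ from Theorem \ref{thm:abstractFO-prop}(v) is precisely what makes this conversion work.
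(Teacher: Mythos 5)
Your proposal is correct and follows essentially the same route as the paper: part i) via Theorem~\ref{thm:abstractFO-prop}(xiii), part ii) by exploiting closedness of $\lV$ together with the decomposition of Theorem~\ref{thm:abstractFO-prop}(ix) and the closed graph theorem, parts iii)--iv) from the orthogonality identity $\iscp{u}{u}=\iscp{p_{\mathrm k}(u)}{p_{\mathrm k}(u)}+\iscp{p_{\tilde{\mathrm k}}(u)}{p_{\tilde{\mathrm k}}(u)}$, and part v) directly from the decomposition. Minor cosmetic differences (e.g.\ in ii) you pass through the auxiliary sequence $u_n=U\tilde\nu_n+\tilde\nu_n$ and invoke boundedness of $\overline{T_0+\widetilde T_0}$ explicitly, where the paper cites Remark~\ref{rem:kernels_hilbert} for equivalence of norms on the kernels; in iv) you obtain closedness of $\lG$ from the isometric surjection rather than re-running the argument of ii)) do not alter the structure of the proof.
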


\begin{remark}
Notice that the assumptions in statements ii)-iv) are gradually strengthened 
(see Remark \ref{rem:lWplus}).
Furthermore, by Remark \ref{rem:degen} the assumption of part iv) is equivalent to 
$\lV\subseteq\lV^{[\perp]}$. 
\end{remark}

\begin{remark}
Of course, in part iv) it is implicitly required that $\widetilde{\lG}$ and $\lG$ are isomorphic. 

A trivial situation when the assumption $\lV\subseteq \lW^+\cap\lW^-$ is satisfied occurs for 
$\lV=\lW_0$. Then $\lG=\widetilde{\lG}=\{0\}$, hence they are obviously isomorphic. 
\end{remark}

\begin{remark}\label{rem:UinH}
	By Remark \ref{rem:isom-char}, in the regime $\lV\subseteq\lW^+\cap\lW^-$ of part iv) of the previous lemma
	from the mapping $U$, given by \eqref{eq:iso-ker-U},
	we can construct an isomorphism between spaces $\widetilde\lG$ and $\lG$ when both are equipped with the standard 
	inner product $\scp\cdot\cdot$.
	Indeed, if we denote by $\tilde{\iota}:\widetilde{\lG}\hookrightarrow\lH$ and 
	${\iota}:\lG\hookrightarrow\lH$ canonical embeddings (note that then $\tilde\iota^*$ and $\iota^*$ 
	are orthogonal projections in $\lH$ onto $\widetilde\lG$ and $\lG$, respectively),
	then a unitary transformation is given by
	\begin{equation*}
	U\Bigl(\tilde\iota^* \,\overline{T_0+\widetilde T_0}\,U^*\iota^*(\overline{T_0+\widetilde T_0})^{-1} U\Bigr)^{-\frac 12}\,.
	\end{equation*}
	A trivial situation is when $\overline{T_0+\widetilde T_0}=\alpha\mathbbm{1}$, for some $\alpha\in\C$, since then 
	the expression above equals $U$.
\end{remark}

\begin{remark}\label{rem:V_U_closed}
It is clear that for any given $U:\widetilde{\lG}\to\ker T_1$, $\lV_U$ defined by \eqref{eq:V_U}
is a subspace of $\lW$ which contains $\lW_0$.
Moreover, if $U$ is closed, then $\lV_U$ is closed as well. Indeed, from 
$u_0^n+U\tilde\nu_n+\tilde\nu_n\to u_0+\nu+\tilde\nu\in\lW$ it follows (see the proof of \cite[Lemma 3.5]{ES22})
that $U\tilde\nu_n\to \nu$ and $\tilde\nu_n\to\tilde\nu$. Hence, for closed $U$ we have $\tilde\nu\in\dom U$
and $\nu=U\tilde\nu$, implying that $u_0+\nu+\tilde\nu=u_0+U\tilde\nu+\tilde\nu\in\lV_U$.

\end{remark}

\begin{proof}[Proof of Lemma \ref{lm:isometry}]
	\begin{itemize}
		\item[i)] Let us assume that $T_1|_\lV$ is a bijective realisation.
		Injectivity implies that $\lV\cap\ker T_1=\{0\}$, while 
		$p_{\tilde{\mathrm{k}}}(\lV)\subseteq \ker \widetilde{T}_1$
		is trivial. Let us prove the opposite inclusion. 
		By Theorem \ref{thm:abstractFO-prop}(xiii), we have 
		$\lW=\lV\dot{+}\ker{T}_1$.
		Thus, for any $\tilde \nu\in \ker \widetilde T_1$ (recall that $\ker \widetilde T_1\subseteq \lW$) there exist unique 
		$u\in \lV$ and $\nu\in \ker{T}_1$, such that $\tilde \nu=u+\nu$.
		This implies $u=-\nu+\tilde\nu$, so $\tilde\nu=p_{\tilde{\mathrm{k}}}(u)\in
		p_{\tilde{\mathrm{k}}}(\lV)$.
		
		For the converse, we shall make use of Theorem \ref{thm:abstractFO-prop}(xiii) 
		again. Let us take an arbitrary $w\in\lW$. By Theorem \ref{thm:abstractFO-prop}(ix) there exist unique $w_0\in\lW_0$, $\nu\in\ker T_1$
		and $\tilde\nu\in\ker\widetilde{T}_1$ such that
		$w=w_0+\nu+\tilde\nu$.
		The assumption ensures existence of $u\in\lV$ such that
		$u=u_0+\mu+\tilde\nu$, for some $u_0\in\lW_0$ and $\mu\in\ker T_1$.
		By subtracting $\tilde\nu$ from the second equation and inserting it into 
		the first, we get 
		$$
		w = (w_0-u_0+u) + (\nu-\mu) \,.
		$$
		Since the first term on the right hand side belongs to $\lV$ (note that $\lW_0\subseteq \lV$) and the second 
		one to $\ker T_1$, Theorem \ref{thm:abstractFO-prop}(xiii) is applicable

		\item[ii)] We start by showing that $U$ is a well-defined function. Let $u,v\in \lV$ be such that $p_{\tilde{\mathrm{k}}}(u)=p_{\tilde{\mathrm{k}}}(v)$.  By the decomposition given in Theorem \ref{thm:abstractFO-prop}(ix), there exist $u_0,v_0\in \lW_0$, such that 
		\begin{align*}
		u = u_0+p_{\mathrm{k}}(u)+p_{\tilde{\mathrm{k}}}(u)\,,\quad 
		v = v_0+p_{\mathrm{k}}(v)+p_{\tilde{\mathrm{k}}}(v)\;.
		\end{align*}
		Thus,
		\begin{align*}
		u-v=(u_0-v_0)+\bigl(p_{\mathrm{k}}(u)-p_{\mathrm{k}}(v)\bigr)\;.
		\end{align*}
		Since $(u-v)-(u_0-v_0)\in \lV$, we get
		$p_{\mathrm{k}}(u)-p_{\mathrm{k}}(v) \in \lV \cap \ker T_1 =\{0\}$.
		Hence, $U$ is well-defined. 
		
		Linearity follows from the linearity of projections $p_{\mathrm{k}}$ and $p_{\tilde{\mathrm{k}}}$.
		Let us show that $U$ is closed. Take $(u_n)$ in $\lV$ such that 
		$p_{\tilde{\mathrm{k}}}(u_n)\to \tilde\nu$ in $\ker \widetilde{T}_1$ and 
		$Up_{\tilde{\mathrm{k}}}(u_n)=p_{\mathrm{k}}(u_n)\to \nu$ in $\ker T_1$ 
		(as $n$ tends to infinity). This implies that $\bigl(p_{\tilde{\mathrm{k}}}(u_n)+p_{\mathrm{k}}(u_n)\bigr)$
		converges to $\tilde\nu+\nu$ in $\lW$. Since $\lW_0\subseteq \lV$, for each $n\in\N$ 
		we have $p_{\tilde{\mathrm{k}}}(u_n)+p_{\mathrm{k}}(u_n)\in\lV$. 
		Thus, by the closedness of $\lV$, we obtain $u:=\tilde\nu+\nu\in\lV$, implying
		$\tilde\nu=p_{\tilde{\mathrm{k}}}(u)\in p_{\tilde{\mathrm{k}}}(\lV)$ and 
		$U \tilde\nu = U p_{\tilde{\mathrm{k}}}(u) =p_{\mathrm{k}}(u)=\nu$.
		
		Similarly as with the closedness of $U$, in the last part the goal is to exploit the fact that 
		$\lV$ is closed.
		Indeed, let us assume that $U$ is bounded, i.e.~there exists $c>0$ such that $\|p_{\mathrm{k}}(u)\|\leq c\|p_{\tilde{\mathrm{k}}}(u)\|$, $u\in\lV$.
		Let us take $(u_n)$ in $\lV$ such that 
		$p_{\tilde{\mathrm{k}}}(u_n)\to \tilde\nu$ in $\ker \widetilde{T}_1$, as $n$ tends to infinity.
		Using the boundedness of $U$ we get that the sequence $(p_{\mathrm{k}}(u_n))$ is a Cauchy sequence 
		in $\ker T_1$, hence convergent. Now we get $\tilde\nu\in p_{\tilde{\mathrm{k}}}(\lV)$
		following the previous reasoning. On the other hand, if $p_{\tilde{\mathrm{k}}}(\lV)$ is closed, 
		then $U$ is a closed linear map between two Hilbert spaces $p_{\tilde{\mathrm{k}}}(\lV)$ and 
		$\ker T_1$. Thus, $U$ is bounded by the closed graph theorem. 
		
%

		\item[iii)] Let $u\in \lV\subseteq \lW$. 
		Using $u = u_0+p_{\mathrm{k}}(u)+p_{\tilde{\mathrm{k}}}(u)$,
		$\lW^{[\perp]}=\lW_0$ (see Theorem \ref{thm:abstractFO-prop}(vi)) and $\lV\subseteq\lW^+$, we get
		\begin{equation}\label{eq:Ucont}
		0\leq \iscp{u}{u}=\iscp{p_{\mathrm{k}}(u)}{p_{\mathrm{k}}(u)}+\iscp{p_{\tilde{\mathrm{k}}}(u)}{p_{\tilde{\mathrm{k}}}(u)}\;,
		\end{equation}
		which in terms of the operator $U$ reads
		\begin{equation*}
		-\iscp{Up_{\tilde{\mathrm{k}}}(u)}{Up_{\tilde{\mathrm{k}}}(u)}
		\leq\iscp{p_{\tilde{\mathrm{k}}}(u)}{p_{\tilde{\mathrm{k}}}(u)}\;.
		\end{equation*}
		Thus, $U:\widetilde{\lG} \to\ker T_1$ is bounded (hence $\widetilde{\lG}$ is closed by part ii)) 
		and $\|U\|\leq 1$ (with respect to the norms
		$\sqrt{\iscp{\cdot}{\cdot}}$ and $\sqrt{-\iscp{\cdot}{\cdot}}$, respectively). 
		
		\item[iv)] When $\lV\subseteq\lW^+\cap\lW^-$, then in 
		\eqref{eq:Ucont} we have equality. 
		This allows us to follow the last part of the proof of part ii) to conclude that 
		$\lG$ is closed as well. 
		Furthermore, $U$ is obviously a unitary transformation between Hilbert spaces
		$(\widetilde{\lG},\iscp{\cdot}{\cdot})$ and $(\lG,-\iscp{\cdot}{\cdot})$.

		\item[v)] Since for any $u\in\lV$ there exists $u_0\in\lW_0$ such that 
		$u=u_0+p_{\mathrm{k}}(u)+p_{\tilde{\mathrm{k}}}(u)=u_0+Up_{\tilde{\mathrm{k}}}(u)
		+p_{\tilde{\mathrm{k}}}(u)$, it is clear that $\lV=\lV_U$. 
		
		
		For arbitrary $u_0\in\lW_0$ and $\tilde\nu\in\widetilde{\lG}$ we have
		$$
		T_1|_\lV(u_0+U\tilde\nu+\tilde\nu) = T_1 (u_0+U\tilde\nu+\tilde\nu) =
			T_1 u_0 + T_1 \tilde\nu = \overline{T}_0 u_0 + (T_1+\widetilde{T}_1) \tilde\nu \,,
		$$
		where we have used $\lV\subseteq\lW$, $T_1(U\tilde\nu)=0$, $T_1|_{\lW_0}=\overline{T}_0$ 
		and $\widetilde{T}_1\tilde\nu=0$.
		
		Let us take two subspaces $\widetilde{\lG}_i\subseteq\ker \widetilde{T}_1$, $i\in\{1,2\}$, and two 
		closed operators $U_i:\widetilde{\lG}_i\to\ker T_1$, $i\in\{1,2\}$, such that 
		$\lV_{U_1}=\lV_{U_2}$. This means that for an arbitrary $\tilde\nu_1\in\widetilde{\lG}_1$
		there exist $u_0\in\lW_0$ and $\tilde\nu_2\in\widetilde{\lG}_2$ such that
		$$
		U_1\tilde\nu_1+\tilde\nu_1 = u_0 +U_2\tilde\nu_2+\tilde\nu_2 \;.
		$$	
		Applying Theorem \ref{thm:abstractFO-prop}(ix) we get $u_0=0$, $\tilde\nu_1 =\tilde\nu_2$ 
		and $U_1\tilde\nu_1=U_2\tilde\nu_2$. Hence, $U_1\subseteq U_2$. By the symmetry we can conclude
		that in fact we have $U_1=U_2$.  
		This proves that such $U$ is unique, and by the first part we have that $U$ is necessarily
		given by \eqref{eq:iso-ker-U}. 
	\end{itemize}
\end{proof}

\begin{remark}\label{rem:V_Uperp}
	For $\lV_U$ given by \eqref{eq:V_U} we can explicitly write $\lV_U^{[\perp]}$ (i.e.~the domain of 
	$(T_1|_{\lV_U})^*$; see Theorem \ref{thm:abstractFO-prop}(xi)) in terms of the adjoint
	operator $U^*$. For simplicity, let us elaborate on this only in the case of bounded 
	$U:\widetilde{\lG}\to\ker T_1$, i.e.~$\widetilde\lG=p_{\tilde{\mathrm{k}}}(\lV)$ is closed
	(see Lemma \ref{lm:isometry}(ii)). 
	Then $U^*:\ker T_1\to\widetilde{\lG}$ is also bounded and 
	$p_{\mathrm{k}}(\lV^{[\perp]})=\ker T_1$. Moreover, 
	$$
	U^*p_\mathrm{k}(v) = U^*P_{\bar{\lG}} p_\mathrm{k}(v) = \widetilde{P}_{\widetilde{\lG}}p_{\tilde{\mathrm{k}}}(v) \,, \quad v\in\lV^{[\perp]}\,,
	$$
	where $P_{\bar\lG}$ and $\widetilde{P}_{\widetilde{\lG}}$ denote the orthogonal projections on 
	$\bar\lG$ and $\widetilde{\lG}$ within spaces $(\ker T_1,-\iscp{\cdot}{\cdot})$ and $(\ker\widetilde{T}_1,\iscp{\cdot}{\cdot})$, respectively 
	($\bar\lG$ is the closure of $\lG=p_\mathrm{k}(\lV)$ in $\ker T_1$).
	Furthermore, $\lV_U^{[\perp]}$ is then given by
	\begin{align*}
	\lV_U^{[\perp]}:=\bigl\{v_0+ \mu_1 + U^*\mu_1 + \mu_2+\tilde\mu_2 :  v_0 & \in\lW_0, \, 
	\mu_1\in\bar\lG, \\
	&\mu_2\in\lG^{[\perp]}\cap\ker T_1, \, \tilde\mu_2\in\widetilde\lG^{[\perp]}\cap\ker \widetilde{T}_1\bigr\} \,.
	\end{align*}
	Note that $\lG^{[\perp]}\cap\ker T_1$ is just the orthogonal complement of $\lG$ within the space
	$(\ker T_1,-\iscp{\cdot}{\cdot})$, and analogously for $\widetilde\lG^{[\perp]}\cap\ker \widetilde{T}_1$.
	
	As an example, let us consider $\lV=\lW_0+\ker\widetilde{T}_1$, for which obviously we have
	$p_{\tilde{\mathrm{k}}}(\lV)=\ker \widetilde{T}_1$ and $\lV\subseteq\lW^+$. 
	It is also easy to check that $\lV^{[\perp]}=\lW_0+\ker T_1$ (see \cite[Corollary 3.2]{ES22}).
	For this $\lV$ we get $\widetilde{\lG}=\ker\widetilde{T}_1$
	and $\lG=\{0\}$, hence both $U$ and $U^*$ are zero operators. Since $\lG^{[\perp]}\cap\ker T_1=\ker T_1$
	and $\widetilde\lG^{[\perp]}\cap\ker \widetilde{T}_1=\{0\}$, it is easy to read 
	that the above expression for $\lV_U^{[\perp]}$ gives the right space $\lW_0+\ker T_1$.
\end{remark}

\subsection{Classification}

Now we are ready to formulate and prove the main result concerning a classification of realisations (of interest) of abstract Friedrichs operators.

\begin{theorem}\label{thm:classif}
Let $(T_0,\widetilde{T}_0)$ be a joint pair of abstract Friedrichs operators 
on $\lH$ and let $T$ be a closed realisation of $T_0$, i.e.~$T_0\subseteq T\subseteq T_1$. In what follows we use $\lV_U$ to denote the space \eqref{eq:V_U}
for a given $U$.
\begin{itemize}
\item[i)] $T$ is bijective if and only if there exists a bounded operator
$U:\ker\widetilde{T}_1\to\ker T_1$ such that $\dom T=\lV_U$.

\item[ii)] $\dom T\subseteq \lW^+$ if and only if there exist a closed subspace 
$\widetilde{\lG}\subseteq \ker\widetilde{T}_1$ and a non-expansive linear operator $U:(\widetilde{\lG},\iscp{\cdot}{\cdot}) \to (\ker T_1,-\iscp{\cdot}{\cdot})$, i.e.~the norm of $U$ with respect to the indicated norms is less than or equal to $1$, such that $\dom T=\lV_U$.

\item[iii)]  $T$ is a bijective realisation with signed boundary map 
if and only if there exists a non-expansive linear operator $U:(\ker\widetilde{T}_1,\iscp{\cdot}{\cdot}) \to (\ker T_1,-\iscp{\cdot}{\cdot})$ such that $\dom T=\lV_U$.

\item[iv)] $\dom T\subseteq \dom T^*$ if and only if there exist closed subspaces 
$\widetilde{\lG}\subseteq \ker\widetilde{T}_1$ and $\lG\subseteq\ker T_1$ and a unitary transformation
$U:(\widetilde{\lG},\iscp{\cdot}{\cdot}) \to (\lG,-\iscp{\cdot}{\cdot})$
(cf.~Remark \ref{rem:isom-char}) such that $\dom T=\lV_U$.

\item[v)] $\dom T= \dom T^*$ if and only if there exists a unitary transformation
$U:(\ker\widetilde{T}_1,\iscp{\cdot}{\cdot}) \to (\ker T_1,-\iscp{\cdot}{\cdot})$ such that $\dom T=\lV_U$.

\item[vi)] By the mapping $U\mapsto T_1|_{\lV_U}$, a one-to-one correspondence between realisations $T$, i.e.~$\dom T$, and classifying operators $U$ is established in each of the above cases.
\end{itemize}
\end{theorem}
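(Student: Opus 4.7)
The plan is to derive each of (i)--(v) from Lemma \ref{lm:isometry}, while (vi) is an immediate consequence of its part (v). In every case the forward direction sets $\lV := \dom T$, defines $U$ via \eqref{eq:iso-ker-U}, reads off the relevant property from the appropriate part of Lemma \ref{lm:isometry}, and concludes $\dom T = \lV_U$ through Lemma \ref{lm:isometry}(v). The backward direction begins with $\lV_U$, invokes Remark \ref{rem:V_U_closed} to secure that $\lV_U$ is a closed subspace of $\lW$ containing $\lW_0$, and verifies the advertised property. For (i), the forward implication uses Lemma \ref{lm:isometry}(i) to obtain $\lV\cap\ker T_1=\{0\}$ and $\widetilde\lG=\ker\widetilde T_1$, after which Lemma \ref{lm:isometry}(ii) gives boundedness of $U$ since $\widetilde\lG$ is trivially closed. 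For the backward direction, given a bounded $U:\ker\widetilde T_1\to\ker T_1$, the directness of the decomposition in Theorem \ref{thm:abstractFO-prop}(ix) forces any element of $\lV_U\cap\ker T_1$ to have vanishing $\ker\widetilde T_1$-component and hence to be zero, and Lemma \ref{lm:isometry}(i) yields bijectivity.

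For (ii), the forward direction is precisely Lemma \ref{lm:isometry}(iii). For the backward direction, pairwise $\iscp\cdot\cdot$-orthogonality of the three summands in \eqref{eq:decomposition}, combined with $\lW_0=\lW^{[\perp]}$, gives for $u = u_0 + U\tilde\nu + \tilde\nu \in \lV_U$
\begin{equation*}
\iscp{u}{u} \;=\; \iscp{U\tilde\nu}{U\tilde\nu} + \iscp{\tilde\nu}{\tilde\nu} \;=\; \iscp{\tilde\nu}{\tilde\nu} - \bigl(-\iscp{U\tilde\nu}{U\tilde\nu}\bigr) \;\geq\; 0 \,,
\end{equation*}
the last inequality being exactly the non-expansion condition on $U$ in the norms specified by Remark \ref{rem:kernels_hilbert}. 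Part (iii) then follows by combining (i) and (ii): non-expansive $U$ on all of $\ker\widetilde T_1$ is simultaneously bounded (hence $T$ bijective) and yields $\lV_U\subseteq\lW^+$, at which point Remark \ref{rem:Vcond} automatically upgrades the condition to $\lV_U^{[\perp]}\subseteq\lW^-$, i.e.~to a signed boundary map.

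Part (iv) uses Theorem \ref{thm:abstractFO-prop}(xi) to identify $\dom T\subseteq\dom T^*$ with $\lV\subseteq\lV^{[\perp]}$, which by Remark \ref{rem:degen} is equivalent to $\lV\subseteq\lW^+\cap\lW^-$; the forward direction is then Lemma \ref{lm:isometry}(iv), while the backward direction reruns the computation of (ii) with equality, getting $\iscp{u}{u}=0$ for all $u\in\lV_U$. For (v) the decisive tool is the explicit formula for $\lV_U^{[\perp]}$ in Remark \ref{rem:V_Uperp}: requiring $\lV_U=\lV_U^{[\perp]}$ forces $\lG^{[\perp]}\cap\ker T_1=\{0\}$ and $\widetilde\lG^{[\perp]}\cap\ker\widetilde T_1=\{0\}$, hence $\lG=\ker T_1$ and $\widetilde\lG=\ker\widetilde T_1$; conversely, when $U:\ker\widetilde T_1\to\ker T_1$ is unitary the formula from Remark \ref{rem:V_Uperp} collapses to $\{v_0+\mu+U^*\mu : v_0\in\lW_0,\,\mu\in\ker T_1\}$, which coincides with $\lV_U$ under the substitution $\tilde\nu=U^*\mu$ (recall $U^*=U^{-1}$). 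Finally, (vi) is immediate from Lemma \ref{lm:isometry}(v). The main technical obstacle is part (v), where one must carefully exploit the description of $\lV_U^{[\perp]}$ to pin down that a polarity-defining $U$ must be everywhere-defined and surjective onto the full kernels; all other parts reduce fairly mechanically to Lemma \ref{lm:isometry}.
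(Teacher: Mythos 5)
Your proof is correct and follows essentially the same structure as the paper's: forward directions via Lemma \ref{lm:isometry}, backward via Remark \ref{rem:V_U_closed} plus the same direct computations, with Remark \ref{rem:Vcond} supplying the upgrade in (iii) and Lemma \ref{lm:isometry}(v) giving (vi). Your two small deviations --- invoking polarisation (Remark \ref{rem:degen}) after computing $\iscp uu=0$ in (iv) rather than computing $\iscp uv=0$ directly, and routing (v) through Remark \ref{rem:V_Uperp} instead of a bare-hands argument --- are both anticipated by the paper, which explicitly states that Remark \ref{rem:V_Uperp} could be used for parts (iv)--(v).
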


\begin{proof}
Existence of such $U$ in all parts is a direct consequence of Lemma \ref{lm:isometry}. 
Thus, it remains to comment only the converse of each claim.

Since in all parts $U$ is bounded, by Remark \ref{rem:V_U_closed} we have that $\lV_U$ is 
a closed subspace of $\lW$ containing $\lW_0$. This means that 
$T_1|_{\lV_U}$ is indeed a closed realisation of $T_0$.

In part i) it is evident that $p_{\tilde{\mathrm{k}}}(\lV_U)=\ker\widetilde{T}_1$, hence we just 
apply Lemma \ref{lm:isometry}(i) to conclude.

For parts ii) and iii) we need to show that $\lV_U\subseteq \lW^+$ 
(bijectivity in part iii) is again consequence of Lemma \ref{lm:isometry}(i); see also Remark \ref{rem:Vcond}). For an arbitrary
$u=u_0+U\tilde\nu+\tilde\nu\in\lV_U$ we have
$$
\iscp{u}{u} = \iscp{U\tilde\nu}{U\tilde\nu} + \iscp{\tilde\nu}{\tilde\nu}
	\geq - \iscp{\tilde\nu}{\tilde\nu} + \iscp{\tilde\nu}{\tilde\nu} = 0 \,,
$$
where we have used that the norm of $U$ is less than or equal to 1
(see also Theorem \ref{thm:abstractFO-prop}(ix)). 

Let us recall that by Theorem \ref{thm:abstractFO-prop}(xi) we have 
$\dom (T_1|_{\lV_U})^*=\lV_U^{[\perp]}$. Thus, for parts iv) and v) we 
need to show that $\lV_U\subseteq \lV_U^{[\perp]}$ and (only for 
part v)) $\lV_U^{[\perp]}\subseteq\lV_U$. 
This can be done using Remark \ref{rem:V_Uperp}, but let us present here 
a direct proof. 
For arbitrary $u=u_0+U\tilde\nu+\tilde\nu$ and $v=v_0+U\tilde\mu+\tilde\mu$
from $\lV_U$, similarly as in the previous calculations, we have
$$
\iscp{u}{v} = \iscp{U\tilde\nu}{U\tilde\mu} + \iscp{\tilde\nu}{\tilde\mu}
	= - \iscp{\tilde\nu}{\tilde\mu} + \iscp{\tilde\nu}{\tilde\mu} = 0 \,,
$$
where we have used that $U$ is an isometry. 
Thus, $\lV_U\subseteq\lV_U^{[\perp]}$. 

Let us prove now the opposite inclusion for $U$ given in 
part v). Let $v\in \lV_U^{[\perp]}\subseteq\lW$. 
By Theorem \ref{thm:abstractFO-prop}(ix), there exist 
$v_0\in \lW_0$, $\mu \in \ker T_1$, $\tilde\mu\in \ker\widetilde{T}_1$, 
such that $v=v_0+\mu+\tilde\mu$. For any $u=u_0+U\tilde\nu +\tilde\nu\in \lV_U$, we have
\begin{align*}
0=\iscp{u}{v} &=\iscp{U\tilde\nu}{\mu}+\iscp{\tilde\nu}{\tilde\mu}\\
&= \iscp{U\tilde\nu}{\mu}-\iscp{U\tilde\nu}{U\tilde\mu} = \iscp{U\tilde\nu}{\mu-U\tilde\mu}\,,
\end{align*}
where we have used that $U$ is a unitary transformation.
The identity above holds for any $\tilde\nu\in \ker\widetilde{T}_1$. 
Since $U$ is surjective and $(\ker T_1, -\iscp{\cdot}{\cdot})$ is a Hilbert space, we get $\mu=U\tilde\mu$. Thus, $v\in \lV_U$ and hence, $V_U^{[\perp]}\subseteq \lV_U$.

Surjectivity of the map $U\mapsto T$ follows from parts i)-v), while 
injectivity holds by Lemma \ref{lm:isometry}(v).
\end{proof}

\begin{remark}
In \cite[Section 4]{AEM-2017} Grubb's classification was applied on 
abstract Friedrichs operators, which differs significantly from the
method of the previous theorem. For instance, in the result just developed 
a realisation is bijective if and only if the classifying operator is
defined on the whole kernel (Theorem \ref{thm:classif}(i)), 
while in the theory of \cite[Chapter 13]{Grubb}
(see also \cite[Theorem 17]{AEM-2017}) the same holds for bijective 
classifying operators. Another difference is that Grubb's classification 
is developed around the reference operator, while such distinguished
operator is not needed here. One can notice the same also in the 
symmetric case (see e.g.~\cite{CMO22}) when comparing von Neumann's (\emph{absolute}) theory
(cf.~\cite[Section 13.2]{Schmudgen}) and the (\emph{relative}) theory developed
by Kre\u\i n, Višik and Birman (cf.~\cite[Section 13.2]{Grubb}).

If we focus on bijective realisations with signed boundary map, then 
the result of part iii) of the previous theorem (see also part vi))
offers a full and explicit 
characterisation contrast to \cite[Theorem 18]{AEM-2017}, where 
the result is optimal only when kernels are isomorphic (cf.~Corollary 
\ref{cor:eqdom}).
\end{remark}

\begin{remark}
It is evident that parts iv) and v) of Theorem \ref{thm:classif} pertain to closed skew-symmetric and skew-selfadjoint realisations of $L_0$ (see \eqref{eq:TLSnotation}), respectively. 
Furthermore, in light of Remark \ref{rem:m-accretive}, we observe that parts ii) and iii) are associated to accretive and m-accretive realisations, respectively. 
Such results can readily be found in the literature for the case $S=\mathbbm{1}$, which are in complete agreement with ours (cf.~Remark \ref{rem:UinH}). However, the theory extends for even more general nonlinear relations, following both the von Neumann approach \cite{PT24}
and the more recent approach based on boundary systems (or boundary quadruples) 
\cite{ACE23, Trostorff23, WW20}. It is worth mentioning that boundary systems were introduced in \cite{SSVW15} as a generalisation of boundary triplets, with the advantage over the latter being the applicability of the theory irrespective of the values of the deficiency indices \cite{WW18}, which is the feature we also have with our results. 

In conclusion, the main novelty of Theorem \ref{thm:classif} lies in connecting the theory with abstract Friedrichs operators and considering general positive symmetric parts $S$, while still preserving the geometrical structure.   
\end{remark}

By Theorem \ref{thm:classif} we know that the number of certain type of 
realisations agrees with the number of corresponding classifying 
operators $U$.
For instance, it is easy to deduce the number of isomorphisms between 
Hilbert spaces. Hence, having this point of view at our hands, 
we can formulate the following
straightforward quantitative generalisation of Corollary \ref{cor:eqdom}.

\begin{corollary}\label{cor:eqdom-number}
Let $(T_0,\widetilde{T}_0)$ be a joint pair of abstract Friedrichs operators 
on $\lH$ and let us denote by $m$ the cardinality of the set of all subspaces $\lV$ of $\lW$ such that $\lV=\lV^{[\perp]}$, 
i.e.~such that $(T_1|_{\lV},\widetilde{T}_1|_{\lV})$ is a pair of mutually adjoint bijective realisations related to $(T_0,\widetilde{T}_0)$.
	\begin{itemize}
		\item[i)] If $\dim\ker T_1 \neq\dim\ker\widetilde T_1$, then $m=0$.
		
		\item[ii)] If $\dim\ker T_1 =\dim\ker\widetilde T_1=0$, then $m=1$.
		
		\item[iii)] If $\dim\ker T_1 =\dim\ker\widetilde T_1=1$, then $m=2$ in the real case, and $m=\infty$ in the complex case. 
		
		\item[iv)] If $\dim\ker T_1 =\dim\ker\widetilde T_1\geq 2$, then $m=\infty$.
	\end{itemize}
\end{corollary}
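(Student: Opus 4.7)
The plan is to invoke the bijective correspondence from Theorem \ref{thm:classif}(v) and (vi), under which $m$ equals the cardinality of the set of unitary transformations $U : (\ker\widetilde{T}_1,\iscp{\cdot}{\cdot}) \to (\ker T_1, -\iscp{\cdot}{\cdot})$. Before invoking it I would briefly verify the equivalence stated in the corollary itself: a subspace $\lV$ with $\lV = \lV^{[\perp]}$ contains $\lW_0$ (since $\lW_0 = \lW^{[\perp]} \subseteq \lV^{[\perp]} = \lV$) and lies in $\lW^+\cap\lW^-$ by Remark \ref{rem:degen}, so Theorem \ref{thm:abstractFO-prop}(xi) and (xii) force $(T_1|_\lV, \widetilde{T}_1|_\lV)$ to be a pair of mutually adjoint bijective realisations; conversely, mutual adjointness together with Theorem \ref{thm:abstractFO-prop}(xi) gives $\lV^{[\perp]} = \lV$ directly.

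Having made this reduction, the four cases follow from elementary facts about unitary maps between Hilbert spaces. For (i), a unitary preserves Hilbert dimension, so unequal dimensions admit no $U$ and give $m = 0$. For (ii), between two trivial spaces the only linear map is zero, so $m = 1$. For (iii), a unitary between two one-dimensional Hilbert spaces is determined by the image of a fixed unit vector, which must itself be a unit vector in the codomain; the unit sphere of a one-dimensional Hilbert space consists of $\{\pm 1\}$ in the real case and of the full unit circle in the complex case, yielding $m = 2$ and $m = \infty$, respectively. Finally for (iv), I would exhibit a one-parameter family of unitaries by fixing orthonormal bases of both kernels, selecting a single two-dimensional sub-block, and rotating within it while leaving every other basis vector fixed; distinct rotation angles in $[0,2\pi)$ produce distinct unitaries, so $m = \infty$ (over both $\R$ and $\C$).

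There is no genuine obstacle here; the argument is essentially bookkeeping of the classification in Theorem \ref{thm:classif}. The only subtlety worth mentioning is that the very statement of the case distinction is unambiguous thanks to Remark \ref{rem:iso-ker}: the equivalence of the standard ambient structure $\scp\cdot\cdot$ with the indefinite structures $\pm\iscp\cdot\cdot$ on the kernels means that the Hilbert dimensions, and likewise the notion of \emph{unitary}, coincide across all three choices, so the counting we carry out in cases (i)--(iv) really does match the dimensions appearing in the hypotheses.
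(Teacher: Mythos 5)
Your argument is correct and is exactly the approach the paper intends: reduce via Theorem~\ref{thm:classif}(v)--(vi) to counting unitary transformations $U:(\ker\widetilde{T}_1,\iscp\cdot\cdot)\to(\ker T_1,-\iscp\cdot\cdot)$, then count them case by case on dimension (the paper leaves this elementary count to the reader, noting only that it is ``easy to deduce the number of isomorphisms between Hilbert spaces''). Your preliminary check that $\lV=\lV^{[\perp]}$ forces $\lW_0\subseteq\lV$ and $\lV\subseteq\lW^+\cap\lW^-$, together with the appeal to Remark~\ref{rem:iso-ker} to make the dimension bookkeeping unambiguous, fills in precisely the details the paper omits.
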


Let us conclude the section by illustrating the obtained results with several examples.

\begin{example}
\begin{itemize}
\item[a)] In Example \ref{exa:1Db} we have commented that $\dim\ker T_1=\dim\ker \widetilde{T}_1 =1$. Thus, since the problem was addressed in the real setting, by Corollary \ref{cor:eqdom-number} there are two closed subspaces with the property that $\lV=\lV^{[\perp]}$. They are precisely $\lV_{\alpha}$, $\alpha\in\{-1,1\}$ (see Example \ref{exa:1Da}).

\item[b)] Let us consider operators from examples \ref{exa:1Da} and \ref{exa:1Db} on $(0,\infty)$, instead of the bounded interval $(a,b)$, i.e.~$L_0u=u'$ and $\lH=L^2((0,\infty);\R)$. Then (see \cite[Example 3.2]{Schmudgen}) we have $d_+(L_0)=d_+(-iL_0)=1$ and $d_-(L_0)=d_-(-iL_0)=0$.
Thus, there is no closed subspace $\lV\subseteq\lW$ such that $\lV=\lV^{[\perp]}$, or in other words, there is no skew-selfadjoint realisation of the operator $-iL_0$.

\item[c)] The previous example is very specific since there is also only one bijective realisation. This can be justified by Theorem \ref{thm:classif}(i), but we also refer to \cite[Theorem 13]{AEM-2017}. 

Let us now present an example where still there is no closed subspace $\lV\subseteq\lW$ such that $\lV=\lV^{[\perp]}$, but for which there are infinitely many bijective realisations. More precisely, we need $\min\bigl\{\dim\ker T_1,\dim\ker\widetilde{T}_1\bigr\}\geq 1$ and $\dim\ker T_1 \neq\dim\ker\widetilde T_1$.

Let $\lH=L^2((0,1);\C^2)$ 
(all conclusions are also valid for the real case) and $\lD=C^\infty_c((0,1);\C^2)$.
For $\vu\in\lD$ and
$$
\mA(x):=
	\begin{bmatrix} 
	1 & 0  \\
	0 & 1-x
	\end{bmatrix}
$$
we define $T_0\vu := (\mA\vu)' + \vu$ and $\widetilde{T}_0\vu:=-(\mA\vu)'+\mA'\vu+\vu$.
It is easy to see that $(T_0,\widetilde{T}_0)$ is a joint pair of abstract Friedrichs operators (just apply Theorem \ref{thm:abstractFO_L0+S} or notice that $T_0$ is a classical Friedrichs operator \cite[Section 5]{EGC}).
As usual, we put $T_1:=\widetilde{T}_0^*$ and $\widetilde{T}_1:=T_0^*$.
Since both $T_1$ and $\widetilde{T}_1$ are of a block structure, calculations of the kernels can be done by studying each component separately. More precisely, $\vu=(u_1,u_2)\in\ker T_1$ if and only if
\begin{align*}
	u_1'+u_1=0 \quad \hbox{and} \quad (a_2u_2)'+u_2=0 \,,
\end{align*} 
where $a_2(x):=1-x$. Thus, we can apply the available results for scalar ordinary differential equations (see e.g.~the second example of \cite[Section 6]{ES22}).

Informally speaking, the equation above for the first component $u_1$ contributes with 1 for both $\dim\ker T_1$ and $\dim\ker \widetilde{T}_1$. 
On the other hand, the second equation contributes with 1 for $\dim\ker T_1$ and 0 for $\dim\ker\widetilde{T}_1$.
The overall result then reads
	\begin{align*}
	\dim \ker T_1=2 \quad \mathrm{and} \quad \dim\ker\widetilde T_1=1\;,
	\end{align*}
which corresponds to what we wanted to get.
\end{itemize}
\end{example}

\begin{remark}
	Another possibility to connect joint pairs of abstract Friedrichs operators $(T_0,\widetilde{T}_0)$ with symmetric operators is to study the operator matrix 
	$$
	J_0=\begin{bmatrix}0&T_0\\ \widetilde{T}_0&0\end{bmatrix}
	$$ 
	on $\lH\oplus\lH$, with $\dom J_0=\lD\oplus \lD$. 
	Indeed, $J_0$ is symmetric (cf.~\cite[Theorem 2.17]{JP17}) and 
	$$
	J_1:=J_0^*=\begin{bmatrix}0&T_1\\ \widetilde{T}_1&0\end{bmatrix} \,, \quad
		\dom J_1=\lW\oplus\lW \,.
	$$
	Moreover, $d_+(J_0)=d_-(J_0)$ (\cite[Theorem 2.20]{JP17}) and it is easy to see that all self-adjoint realisations of $J_0$ are given by $J_1|_{\lV^{[\perp]}\oplus\lV}$ (see Theorem \ref{thm:abstractFO-prop}(xi)), where $\lV\subseteq\lW$ is a closed subspace containing $\lW_0$ (or equivalently $\lD$).
\end{remark}

\section{Symmetric case}\label{sec:symm}

In this last part of the paper we focus on symmetric operators
and present several results that can be directly extracted from the 
theory just developed.

\begin{corollary}
Let $A$ be a densely defined symmetric operator on $\lH$ 
and let $S_1$, $S_2$ be bounded self-adjoint linear operators such that $S_2$ is in 
addition strictly positive. Define an indefinite inner product on $\dom A^*$ by
\begin{equation}\label{eq:iscpA}
\iscpA{u}{v} := i\Bigl(\scp{A^* u}{v} - \scp{u}{A^*v}\Bigr) \;, \quad u,v\in\dom A^* \;.
\end{equation}
Then we have the following.

\begin{itemize}
\item[i)] It holds
$$
\dim\ker (A^*-S_1-iS_2) = d_+(A) \quad \hbox{and} \quad	
	\dim\ker (A^*-S_1+iS_2)= d_-(A) \,,
$$
where $d_\pm(A)$ denote deficiency indices of $A$ (cf.~\cite[Section 3.1]{Schmudgen}).

\item[ii)] $\dom A^* = \dom \overline{A} \dotplus \ker (A^*-S_1-iS_2) \dotplus 
\ker (A^*-S_1+iS_2)$,
where the sums are direct and all spaces on the right-hand side 
are pairwise $\iscpA{\cdot}{\cdot}$-orthogonal. 

\item[iii)] There is one-to-one correspondence between all closed symmetric realisations 
of $A$ and all unitary transformations $U$ between any closed isomorphic subspaces 
of $(\ker (A^*-S_1+iS_2),\iscpA{\cdot}{\cdot})$ and $(\ker (A^*-S_1-iS_2),-\iscpA{\cdot}{\cdot})$,
respectively. 

\item[iv)] There is one-to-one correspondence between all self-adjoint realisations 
of $A$ and all unitary transformations $U:(\ker (A^*-S_1+iS_2),\iscpA{\cdot}{\cdot})\to
(\ker (A^*-S_1-iS_2),-\iscpA{\cdot}{\cdot})$.

\item[v)] Correspondences of parts iii) and iv) can be expressed by $U\mapsto A_U=A^*|_{\dom A_U}$, where 
\begin{equation*}
\dom A_U:=\bigl\{u_0+U\tilde\nu + \tilde\nu : u_0\in\dom \overline{A}, \, \tilde\nu\in \dom U\bigr\} \,,
\end{equation*}
and $A_U(u_0+U\tilde\nu + \tilde\nu)=\overline{A} u_0+ (S_1+iS_2)\tilde\nu + (S_1-iS_2)U\tilde\nu$.
\end{itemize}
\end{corollary}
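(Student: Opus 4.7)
The plan is to reduce the entire corollary to the abstract Friedrichs framework and then invoke Theorem \ref{thm:classif}. I would set $L_0 := i(A - S_1)$, which is skew-symmetric because $A - S_1$ is symmetric, and $S := S_2$, which is bounded, self-adjoint and strictly positive by hypothesis. The converse direction of Theorem \ref{thm:abstractFO_L0+S} then certifies that $(T_0, \widetilde T_0) := (L_0 + S, -L_0 + S)$ is a joint pair of abstract Friedrichs operators. A direct identification yields $T_1 = i(A^* - S_1) + S_2$ and $\widetilde T_1 = -i(A^* - S_1) + S_2$, hence $\lW = \dom A^*$, $\lW_0 = \dom \overline A$, $\ker T_1 = \ker(A^* - S_1 - iS_2)$, and $\ker \widetilde T_1 = \ker(A^* - S_1 + iS_2)$. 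A short computation exploiting self-adjointness of $S_1$ and $S_2$ confirms that the Friedrichs indefinite inner product coincides with the prescribed one, namely $\iscp{u}{v} = i(\scp{A^*u}{v} - \scp{u}{A^*v}) = \iscpA{u}{v}$ for $u, v \in \lW$.

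With these identifications, part (i) follows from Lemma \ref{lem:defect} applied to the skew-symmetric operator $iA$ with the admissible bounded perturbation $C := S_2 - iS_1$ (whose symmetric part $S_2$ is strictly positive). Indeed, the relevant kernel computes as $\ker(iA^* + S_2 - iS_1) = \ker(A^* - S_1 - iS_2)$, and Lemma \ref{lem:defect} together with the paper's convention $d_\pm(L_0) := d_\pm(-iL_0)$ evaluates its dimension as $d_+(-i \cdot iA) = d_+(A)$. The identity for $d_-(A)$ is analogous using $C := S_2 + iS_1$. Part (ii) is then immediate from Theorem \ref{thm:abstractFO-prop}(ix) once $\iscp{\cdot}{\cdot} = \iscpA{\cdot}{\cdot}$ is in hand.

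For parts (iii)--(v), the crucial bridge is the bijection $B \leftrightarrow T := i(B - S_1) + S_2$ between closed realisations $B$ of $A$ (satisfying $A \subseteq B \subseteq A^*$) and closed realisations $T$ of $T_0$; its inverse reads $B = -iT + S_1 + iS_2$. A straightforward manipulation of inner products, relying only on boundedness and self-adjointness of $S_1, S_2$, establishes $\dom B^* = \dom T^*$ and $T^* = -iB^* + iS_1 + S_2$ on this common domain. Consequently $B$ is symmetric (resp. self-adjoint) precisely when $\dom T \subseteq \dom T^*$ (resp. $\dom T = \dom T^*$), so parts (iv), (v) and (vi) of Theorem \ref{thm:classif} transfer verbatim and produce the classifying operator $U$. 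The explicit formula in (v) is then recovered by computing $A_U(u_0 + U\tilde\nu + \tilde\nu) = \overline A u_0 + A^*(U\tilde\nu) + A^*\tilde\nu$ and applying the kernel identities $A^*\tilde\nu = (S_1 - iS_2)\tilde\nu$ (since $\tilde\nu \in \ker \widetilde T_1$) and $A^*(U\tilde\nu) = (S_1 + iS_2)U\tilde\nu$ (since $U\tilde\nu \in \ker T_1$). I expect the only genuine obstacle to be the careful bookkeeping of adjoints in the correspondence $B \leftrightarrow T$; once that is set up, everything else is a clean transcription from the Friedrichs theory.
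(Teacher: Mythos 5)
Your proposal reconstructs the argument exactly along the lines of the paper: take $T_0 := iA - iS_1 + S_2$, $\widetilde T_0 := -iA + iS_1 + S_2$, invoke Theorem~\ref{thm:abstractFO_L0+S} to certify the pair is abstract Friedrichs, check via Remark~\ref{rem:L0+C} (or directly) that $\iscp{\cdot}{\cdot}=\iscpA{\cdot}{\cdot}$, and then read off parts i)--v) from Lemma~\ref{lem:defect}, Theorem~\ref{thm:abstractFO-prop}(ix), and Theorem~\ref{thm:classif}. The identifications $\lW=\dom A^*$, $\lW_0=\dom\overline A$, $\ker T_1=\ker(A^*-S_1-iS_2)$, $\ker\widetilde T_1=\ker(A^*-S_1+iS_2)$, and the bridge $B\leftrightarrow T=i(B-S_1)+S_2$ with $T^*=-iB^*+iS_1+S_2$, are all correct. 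Your use of Lemma~\ref{lem:defect} with $L_0=iA$ and two different admissible perturbations $C=S_2\mp iS_1$ for $d_\pm$ is legitimate, since the lemma guarantees independence of $C$.

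One remark you should make explicit rather than pass over: your (correct) kernel identities $A^*\tilde\nu=(S_1-iS_2)\tilde\nu$ for $\tilde\nu\in\ker\widetilde T_1$ and $A^*(U\tilde\nu)=(S_1+iS_2)U\tilde\nu$ for $U\tilde\nu\in\ker T_1$ give
\begin{equation*}
A_U(u_0+U\tilde\nu+\tilde\nu)=\overline A\,u_0+(S_1-iS_2)\tilde\nu+(S_1+iS_2)U\tilde\nu \,,
\end{equation*}
whereas the corollary as stated puts the $(S_1+iS_2)$ factor on $\tilde\nu$ and $(S_1-iS_2)$ on $U\tilde\nu$. Since $U$ maps $\ker(A^*-S_1+iS_2)$ to $\ker(A^*-S_1-iS_2)$ (consistently with parts iii)--iv) and with Theorem~\ref{thm:classif}), your version is the one compatible with $A_U=A^*|_{\dom A_U}$, and the published formula appears to have the two terms interchanged. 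You claim to have ``recovered'' the stated formula, but in fact you derived a different (and correct) one; say so.
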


\begin{proof}
If we define $T_0:=iA-iS_1+S_2$ and $\widetilde{T}_0:=-iA+iS_1+S_2$, 
then the pair $(T_0,\widetilde{T}_0)$ 
is a joint pair of abstract Friedirchs operators by Theorem \ref{thm:abstractFO_L0+S}. 
Moreover, corresponding indefinite inner product \eqref{eq:D} 
agrees with $\iscpA{\cdot}{\cdot}$ (see Remark \ref{rem:L0+C}).

Therefore, the statements of the corollary follow from 
Lemma \ref{lem:defect}, Theorem \ref{thm:abstractFO-prop}(ix) and 
Theorem \ref{thm:classif} (note that $\dom A_U$ agrees with \eqref{eq:V_U} for the above choice of 
$(T_0,\widetilde{T}_0)$).
%
%
\end{proof}

If $S_i=\alpha_i\mathbbm{1}$, $i=1,2$, where $\alpha_1\in\R$ and $\alpha_2>0$, 
then the statement of the previous theorem is well-known and can be found 
in many textbooks on unbounded linear operators. For instance, in \cite{Schmudgen}
part i) is present in Section 3.1, part ii) in Proposition 3.7 (von Neumann's formula)
and parts iii)-v) are studied in Section 13.2 as part of the von Neumann extension theory
(see also \cite[Chapter X]{Conway}). 
Moreover, the correspondence given in part v) completely agrees with the one of 
\cite[Theorem 13.9]{Schmudgen} since for this choice of bounded operators $S_i$, $i=1,2$,
the same $U$ represents a unitary transformation when the standard inner product of the 
Hilbert space $\lH$ is considered (see Remark \ref{rem:UinH}).

Let us just remark that the geometrical point of view provided in part ii), i.e.~orthogonality 
with respect to $\iscpA{\cdot}{\cdot}$, is something that is not commonly present,
although $\iscpA{\cdot}{\cdot}$ is (up to a multiplicative constant). 
More precisely, in \cite[Definition 3.4]{Schmudgen} (see also Lemma 3.5 there) 
the indefinite inner product $-i\iscpA{\cdot}{\cdot}$
is referred to as the \emph{boundary form} and it is an important part of 
the extension theory of boundary triplets (\cite[Chapter 14]{Schmudgen} and \cite[Section 13.4]{Grubb}; see also \cite{SSVW15} for more general boundary systems).
A more advanced study of boundary forms for Hilbert complexes can be found in a recent work \cite{HPS23}.

Of course, in the standard theory of symmetric operators it is usually satisfactory to observe only the case $S_1=0$ and $S_1=\mathbbm{1}$. Thus, the preceding corollary may seem like an excessive technical complication.
Here we want to stress one more time that our primary focus was in developing a classification result for abstract Friedrichs operators where such approach can be justified, e.g.~by perceiving that not all bijective realisations of $T_0=L_0+S$ correspond to skew-symmetric realisations of $L_0$ (see Section \ref{sec:classif}).
Therefore, our intention is to see the last corollary principally as a way to connect two theories, while an additional abstraction can sometimes offer a better sense of the underlying structure.

\section{Acknowledgements}
This work is supported by the Croatian Science Foundation under project 
IP-2022-10-7261 ADESO. 

The authors would like to express their gratitude to the anonymous referees for their careful work and for several
remarks that helped to improve the article.

\end{document}